\newcolumntype{?}{!{\vrule width 1pt}}
\newtheorem{theorem}{Theorem}
\numberwithin{theorem}{section}
\newtheorem{proposition}[theorem]{Proposition}
\newtheorem{lemma}[theorem]{Lemma}
\newtheorem{corollary}[theorem]{Corollary}
\newtheorem{problem}[theorem]{Problem}
\newtheorem{remark}[theorem]{Remark}
\newtheorem{example}[theorem]{Example}
\newtheorem{conjecture}[theorem]{Conjecture}
\theoremstyle{definition}
\newcommand{\PP}{\mathbb{P}}
\newcommand{\RR}{\mathbb{R}}
\newcommand{\QQ}{\mathbb{Q}}
\newcommand{\CC}{\mathbb{C}}
\newcommand{\NN}{\mathbb{N}}
\DeclareMathOperator{\Vor}{Vor}
\DeclareMathOperator*{\argmin}{arg\,min}
\newcommand{\SymRR} {\mathcal{S}}
\DeclareMathOperator{\Jac}{Jac}
\date{}
\title{\textbf{Voronoi Cells of Varieties}
\author{Diego Cifuentes, Kristian Ranestad, \\ Bernd Sturmfels and
Madeleine Weinstein}}
\begin{document}
\maketitle

 \begin{abstract}
 \noindent
Every real algebraic variety determines a Voronoi decomposition
of its ambient Euclidean space. Each Voronoi cell  is a convex 
semialgebraic set in the normal space of the variety at a point.
We compute the algebraic boundaries of these Voronoi~cells.
 \end{abstract}

\section{Introduction}

Every finite subset $X$ of $\RR^n$ defines a Voronoi decomposition of the ambient Euclidean space.
The \emph{Voronoi cell} of a point $y\in X$ consists of all points whose closest point in $X$ is~$y$, i.e.
\begin{equation}
\label{eq:voronoidef}
\Vor_X(y) \,\,\,:= \,\,\, \bigl\{\,u\in \RR^n: y \in \argmin_{x\in X} \|x-u\|^2 \,\bigr\}.
\end{equation}
This is a convex polyhedron with at most $|X|-1$ facets.
The study of these cells, and how they depend on $X$, is ubiquitous in
computational geometry and its numerous applications.

In what follows we assume that $X$ is a real algebraic variety of codimension $c$ and that $y$ is a smooth point on $X$.
The ambient space is $\RR^n$ with its Euclidean metric.
The Voronoi cell $\,\Vor_X(y)\,$ is a convex semialgebraic set of dimension~$c$.
It lives in the {\em normal space} 
$$ N_X(y) \,\,= \,\, \bigl\{ u \in \RR^n \, : \, u-y  \,\,\,
\hbox{is perpendicular to the tangent space of $X$ at $y$} \bigr\}. $$
The topological boundary of $\Vor_X(y)$ 
in $N_X(y)$ is denoted by $\partial \Vor_X(y)$.
It  consists of the points in $X$ that have at least two closest points in $X$, including~$y$.
In this paper we study the {\em algebraic boundary} $\partial_{\rm alg} \! \Vor_X(y)$.
This is the hypersurface in the complex affine space $ N_X(y)_\CC \simeq \CC^{c}$
obtained as the Zariski closure of $\partial \Vor_X(y)$ over the field of definition of $X$.
The degree of this hypersurface is denoted $\delta_X(y)$ and called the 
{\em Voronoi degree} of $X$ at $y$.
If  $X$ is irreducible and $y$ is a general point on $X$,
then this degree does not depend on $y$.

\begin{example}[Surfaces in 3-space] \rm
Fix a general inhomogeneous polynomial $f \in \QQ[x_1,x_2,x_3]$
of degree $d \geq 2$ and let $X = V(f)$ be its surface in $\RR^3$.
The normal space at a general point $y \in X$ is the line
$\,N_X(y) = \{ y + \lambda (\nabla f) (y) \,:\, \lambda \in \RR \}$.
The Voronoi cell $\Vor_X(y)$ is a line segment (or ray) 
in $N_X(y)$ that contains the point $y$.
The boundary $\partial \Vor_X(y)$ consists of $\leq 2$ points from among the 
zeros of an irreducible polynomial in $\QQ[\lambda]$.
We shall see that this polynomial has degree $d^3+d-7$.
Its complex zeros form the algebraic boundary $\partial_{\rm alg} \! \Vor_X(y)$.
Thus, the Voronoi degree of the surface~$X$ is $d^3+d-7$.
For example, let $d=2$ and fix $\,y=(0,0,0)\,$ and
$\,f = x_1^2 + x_2^2 + x_3^2 -3 x_1 x_2 -5 x_1 x_3-7x_2 x_3+x_1+x_2+x_3$.
Then  $\partial_{\rm alg} \! \Vor_X(y)$ consists of the three zeros of
$\, \bigl\langle \,u_1 - u_3, \,u_2-u_3, \,368 u_3^3+71 u_3^2-6u_3-1\, \bigr\rangle $.
The Voronoi cell is the segment $\,\Vor_X(y) = \{ (\lambda,\lambda,\lambda) \in \RR^3 \,:\,
-0.106526\ldots \leq \lambda \leq  0.12225\ldots \}$.
\end{example}

\begin{figure}[htb]
  \centering
  \includegraphics[width=195pt]{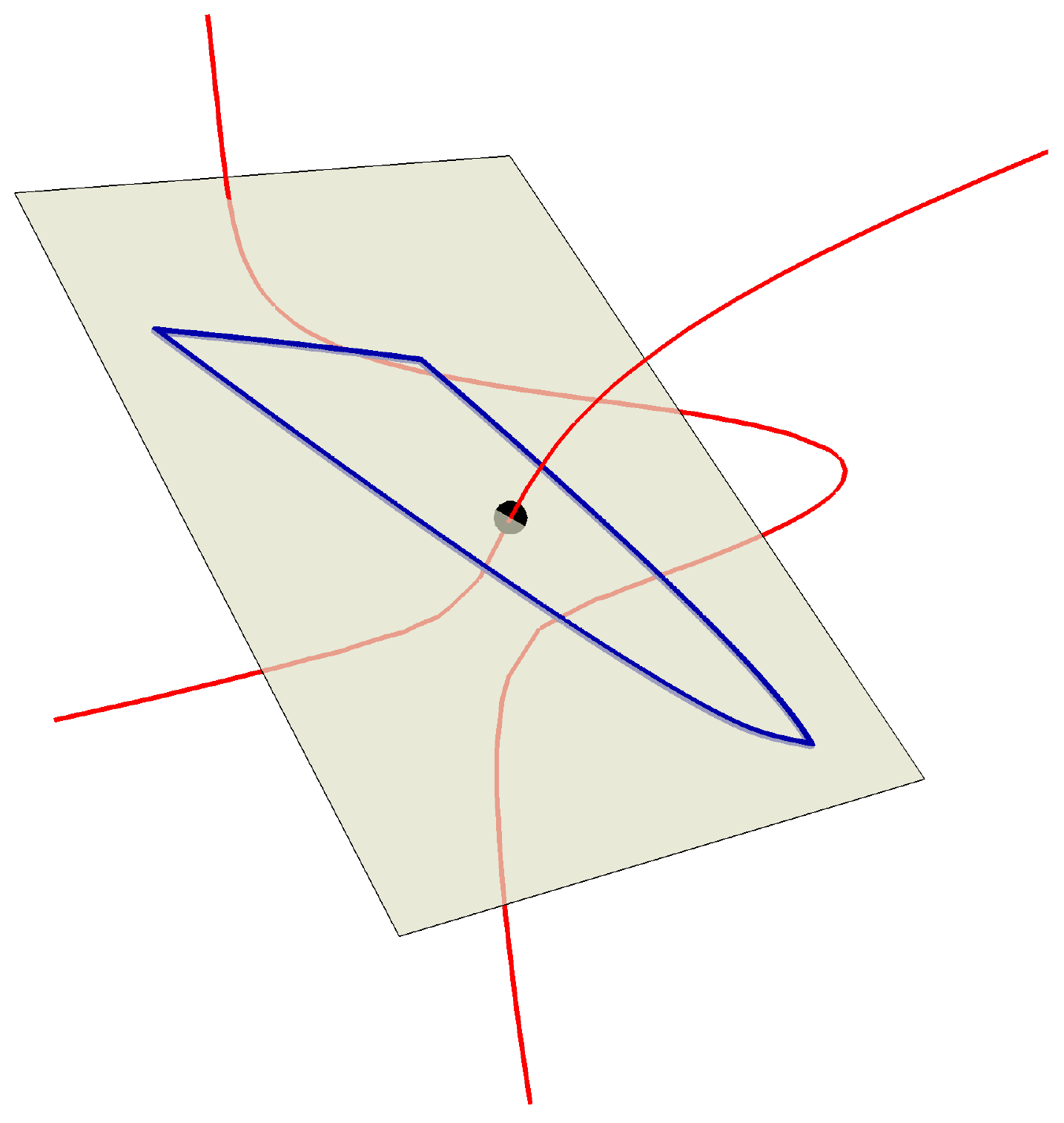} \qquad\qquad
  \includegraphics[width=188pt]{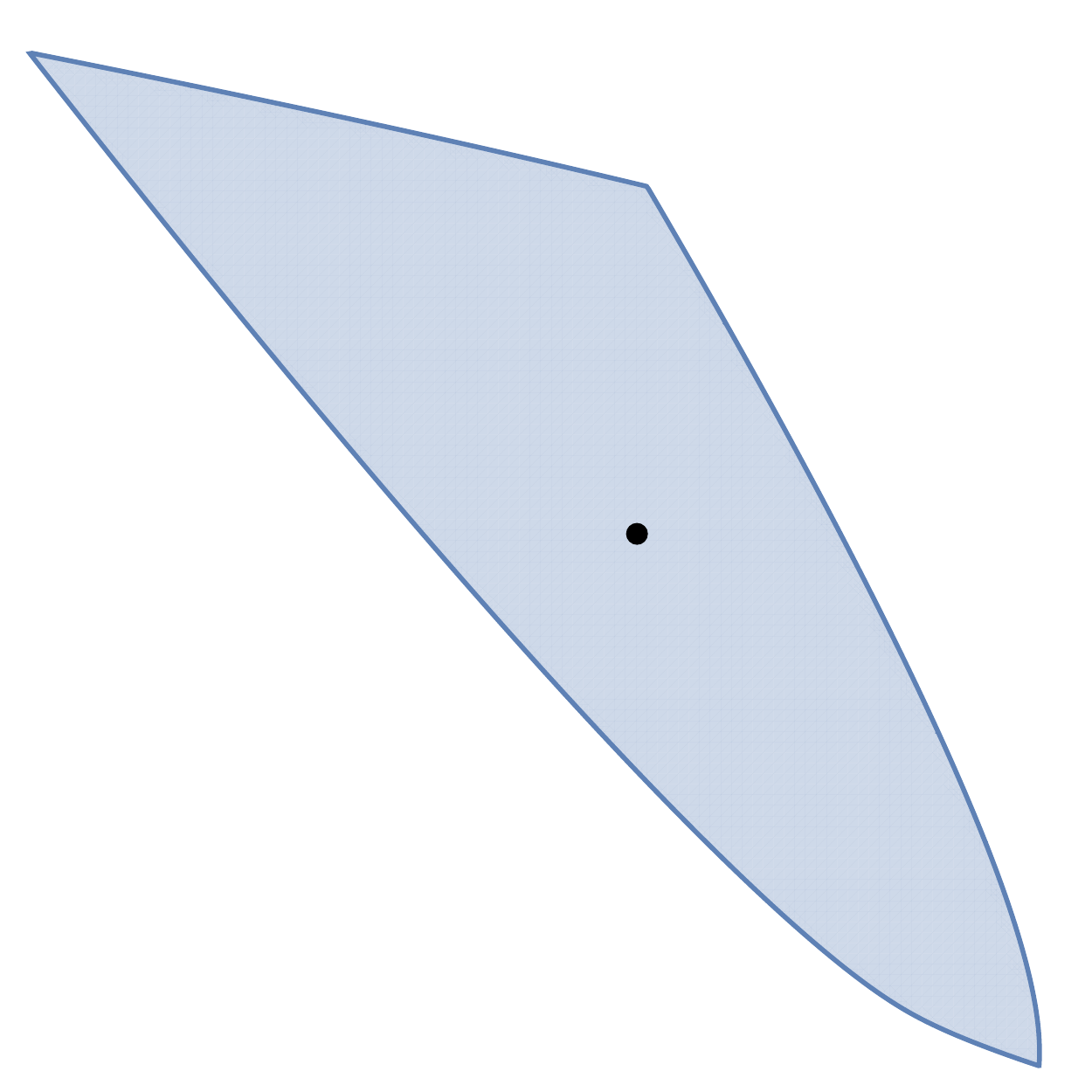}
  \caption{\label{fig:spacecurve} A quartic space curve, shown with the Voronoi cell in one of its normal planes.}
\end{figure}

\begin{example}[Curves in 3-space]\label{ex:spacecurve} \rm
Let $X $ be a general algebraic curve in $\RR^3$.
For $ y \in X$, the Voronoi cell $\Vor_X(y)$ is a convex set in the normal plane $N_X(y) \simeq \RR^2$.
Its algebraic boundary $\partial_{\rm alg} \! \Vor_X(y)$ is a plane curve
of degree $\delta_X(y)$. This  Voronoi degree can be expressed
in terms of the degree and genus of $X$.
Specifically, if $X$ is the intersection of two general quadrics in $\RR^3$, then the Voronoi degree is $12$.
Figure~\ref{fig:spacecurve} shows one such quartic space curve~$X$ together with the normal plane at a point $y \in X$.
The Voronoi cell $\Vor_X(y)$ is the planar convex region highlighted on the right.
Its boundary  is an algebraic curve of degree $\delta_X(y) = 12$.
\end{example}

 Voronoi cells of varieties belong to
the broader context of {\em metric algebraic geometry}.
We propose this name for the research trend that revolves around
articles like \cite{BKL, CAPT, CHS, DHOST, Ek, HW, OS}.
Metric algebraic geometry is concerned with
properties of real algebraic varieties that depend on a distance metric.
Key concepts include
the Euclidean distance degree \cite{BKL, DHOST}, distance function \cite{OS},
bottlenecks \cite{Ek}, reach,  offset hypersurfaces, medial axis \cite{HW}, and cut locus \cite{CHS}.

We study the Voronoi decomposition to answer the question for any point in ambient space, ``What point on the variety $X$ am I closest to?'' Another question one might ask is, ``How far do we have to get away from $X$ before there is more than one answer to the closest point question?'' The union of the boundaries of the Voronoi cells is the locus of points in $\RR^n$ that have more than one closest point on $X$. This set is called the \emph{medial axis} (or \emph{cut locus}) of the variety. The distance from the variety to its medial axis, which is the answer to the ``how far'' question, is called the \emph{reach} of $X$. This quantity is of interest, for example, in topological data analysis, as it is the main quantity determining the density of sample points needed to compute the persistent homology of $X$.  We refer to 
\cite{BKSW, BO, DEHH} for recent progress on sampling at the interface
of  topological data analysis with metric algebraic geometry.
The distance from a point $y$ on $X$ to the variety's medial axis could be considered the {\em local reach} of $X$. Equivalently, this is the distance from $y$
 to the boundary of its Voronoi cell $\Vor_X(y)$.

The present paper is organized as follows.
In Section~\ref{s:2} we describe the exact symbolic computation
of the Voronoi boundary at $y$ from the equations that define $X$.
We present a Gr\"obner-based algorithm
whose input is $y$ and the ideal of $X$ and whose output is
the ideal defining $\partial_{\rm alg} \! \Vor_X(y)$.
In Section~\ref{s:3} we consider the case when $y$ is a low rank matrix and $X$
is the variety of these matrices.
Here, the Eckart-Young Theorem yields an explicit description of $\Vor_X(y)$ in terms of the spectral norm.
Section~\ref{s:4} is concerned with inner approximations of the Voronoi cell
$\Vor_X(y)$ by spectrahedral shadows.
This is derived from the Lasserre hierarchy in polynomial optimization.
In Section~\ref{s:5} we present formulas for the degree
of the Voronoi boundary $\partial_{\rm alg} \! \Vor_X(y)$ when 
$X,y$ are sufficiently general and ${\rm dim}(X)\leq 2$.
These formulas are proved in Section~\ref{s:6} using tools from intersection theory in algebraic geometry.

\section{Computing with Ideals}\label{s:2}

In this section we describe Gr\"obner basis methods for finding the
Voronoi boundaries of a given variety.
We start with an ideal
$I = \langle f_1,f_2,\ldots,f_m \rangle$ in $\QQ[x_1,\ldots,x_n]$ whose
real variety $X = V(I) \subset \RR^n$ is assumed to be nonempty.
One often further assumes that $I$ is real radical and prime, so that 
$X_\CC$ is an irreducible variety in $\CC^n$ whose real points are Zariski dense.
Our aim is to compute the Voronoi boundary of a given point $y \in X$.
In our examples, the coordinates of the
point $y$ and the coefficients of the polynomials $f_i$ are rational numbers.
Under these assumptions, the following computations are done
 in polynomial rings over $\QQ$.

Fix the polynomial ring $R = \QQ[x_1,\ldots,x_n,u_1,\ldots,u_n]$
where $u = (u_1,\ldots,u_n)$ is an additional unknown point.
The {\em augmented Jacobian} of $X$ at $x$ is the following matrix of size
$(m+1)\times n$ with entries in $R$. 
It contains the $n$ partial derivatives of the $m$ generators of~$I$:
\begin{equation*} %\label{eq:augjac}
J_I(x,u) \quad := \quad
\begin{bmatrix}
    u-x \\
    (\nabla f_1)(x) \\
    \vdots \\
    (\nabla f_m)(x)
\end{bmatrix} 
\end{equation*}
Let $N_I$ denote the ideal in $R$ generated by $I$ and  the $(c+1) \times (c+1)$ minors
of the augmented Jacobian $J_I(x,u)$, where $c$ is the codimension of the given variety $X \subset \RR^n$.
The ideal $N_I$ in $R$ defines a subvariety of dimension $n$ in $\RR^{2n}$,
namely the {\em Euclidean normal bundle} of $X$. 
Its points are pairs $(x,u)$ where $x$ is a point in  $X$ and $u$ lies in the normal space of $X$ at $x$.

\begin{example}[Cuspidal cubic]
\label{ex:cuspidal} \rm 
Let $n=2$ and $I = \langle \,x_1^3 - x_2^2\,\rangle$, 
so $X = V(I) \subset \RR^2$ is a cubic curve with a cusp at the origin.
The ideal of the Euclidean normal bundle of $X$ is
\begin{align*} %\label{eq:N_I}
 N_I \,\,=\,\, \bigl\langle \,x_1^3-x_2^2 \, , \,
 \det \! \left(\begin{smallmatrix} u_1 {-} x_1 \, &  \, u_2 {-} x_2 \\ 3 x_1^2 & -2 x_2 \end{smallmatrix}\right)
 \, \bigr\rangle. 
\end{align*}
\end{example}

Let $N_I(y)$ denote the linear ideal that is obtained from $N_I$ by replacing
the unknown point $x$ by the given point $y \in \RR^n$.
For instance, for $y= (4,8)$ we obtain $\,N_I(y) = \langle u_1 + 3 u_2 - 28 \rangle$.
We now define the {\em critical ideal} of the variety $X$ at the point $y$ as
\begin{equation*} %\label{eq:critical}
 C_I(y) \quad = \quad \, I \, + \, N_I \, + \, N_I(y) \,+\,
\langle\, \|x-u\|^2 - \|y-u\|^2 \rangle 
 \quad \subset \quad R. 
\end{equation*}
The variety of $C_I(y)$ consists of  pairs $(u,x)$ such that
$x$ and $y$ are equidistant from $u$ and both are critical points of the
distance function from $u$ to $X$.
The {\em Voronoi ideal} is the following ideal in $\QQ[u_1,\ldots,u_n]$. It
is obtained  from the critical ideal by saturation and elimination:
\begin{equation}
\label{eq:voonoiideal} \Vor_I(y) \quad =\quad  \bigl( \,
C_I (y) \,:\, \langle x-y \rangle^\infty \bigr) \,\cap \, \QQ[u_1,\ldots,u_n]. 
\end{equation}

The geometric interpretation of each step in
our construction implies the following result:

\begin{proposition} \label{prop:algorithm}
The affine variety in $\,\CC^n$  defined by the Voronoi ideal $\,\Vor_I(y)$
contains the algebraic Voronoi boundary 
$\partial_{\rm alg} \! \Vor_X(y)$ of the given real variety $X$ at its point $y$.
  \end{proposition}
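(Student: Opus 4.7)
The plan is to show the topological containment $\partial\Vor_X(y)\subseteq V(\Vor_I(y))$; since $V(\Vor_I(y))$ is Zariski closed and $\partial_{\rm alg}\Vor_X(y)$ is by definition the Zariski closure of $\partial\Vor_X(y)$, this forces the desired inclusion. The idea is to produce, for each $u\in\partial\Vor_X(y)$, a witness $(x,u)$ in the variety of $C_I(y)$ that survives the subsequent saturation and elimination steps.

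First I would argue that every $u\in\partial\Vor_X(y)$ admits a second closest point $x\in X\setminus\{y\}$: if $y$ were the unique minimizer of $\|{\cdot}-u\|^2$ on $X$, then by continuity $y$ would remain the unique closest point for all $u'$ in a neighborhood of $u$, placing $u$ in the interior of the Voronoi cell, a contradiction. Given such an $x$, I would then verify that $(x,u)\in V(C_I(y))$ by unpacking the four constituents: (i) $x\in X$ kills the generators of $I$; (ii) both $x$ and $y$ are smooth critical points of the squared distance from $u$ on $X$, so $u-x$ and $u-y$ lie in the respective normal spaces, which at smooth points coincide with the row spans of $(\nabla f_i)(x)$ and $(\nabla f_i)(y)$, whence the augmented Jacobians $J_I(x,u)$ and $J_I(y,u)$ have rank at most $c$ and their $(c{+}1)\times(c{+}1)$ minors vanish, killing $N_I$ and $N_I(y)$; (iii) the equidistance $\|x-u\|^2=\|y-u\|^2$ is the last generator.

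Because $x\neq y$, the witness $(x,u)$ is not destroyed by the saturation at $\langle x-y\rangle^\infty$, which excises only the irreducible components of $V(C_I(y))$ lying entirely in the diagonal $\{x=y\}$. Standard elimination theory identifies $V(\Vor_I(y))\subset\CC^n_u$ with the Zariski closure of the image of $V(C_I(y):\langle x-y\rangle^\infty)$ under the projection $(x,u)\mapsto u$, and this closure contains $u$. Thus $\partial\Vor_X(y)\subseteq V(\Vor_I(y))$, and taking closure concludes.

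The only delicate point is the implicit smoothness assumption on the companion $x$ used in step (ii). For generic $u\in\partial\Vor_X(y)$ this is automatic since $X_{\rm sing}$ is a proper subvariety, and the non-generic boundary points whose only companion happens to be singular form a lower-dimensional subset, recovered by Zariski closure of the generic stratum and one final appeal to the closedness of $V(\Vor_I(y))$.
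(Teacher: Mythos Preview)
Your argument is correct and amounts to exactly the ``geometric interpretation of each step'' that the paper invokes as its entire proof; the paper gives no further detail beyond that one sentence, so your proposal is a faithful expansion of the intended reasoning. As a minor simplification: if one adopts the paper's suggested hypothesis that $I$ is radical of pure codimension~$c$, your last paragraph becomes unnecessary, since at a singular companion $x$ the Jacobian of the $f_i$ already has rank $<c$, so all $(c{+}1)$-minors of $J_I(x,u)$ vanish automatically and the witness $(x,u)$ lands in $V(N_I)$ without any smoothness hypothesis on~$x$.
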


\begin{example} \rm
For the point $y=(4,8)$ on the
cuspidal cubic $X$ in Example~\ref{ex:cuspidal}, we have
$N_I(y) = \langle u_1+3 u_2-28 \rangle $.
Going through the steps above, we find that the Voronoi ideal~is
$$ 
\Vor_I(y)  \,\,\,=\,\,\,\langle u_1-28,u_2 \rangle 
\,\,\cap \,\, \langle u_1+26,u_2-18 \rangle
\,\,\cap \,\, \langle u_1+3 u_2-28,\,27 u_2^2-486 u_2+2197 \rangle. 
$$
The third component has no real roots and is hence extraneous.
The Voronoi boundary consists of two points: 
$\,\partial \Vor_X(y) = \{(28,0), (-26,18)\}$.
The Voronoi cell $\Vor_X(y)$ is the line segment connecting these points.
This segment is shown in green in Figure~\ref{fig:cuspidalcubic}.
Its right endpoint $(28,0)$ is equidistant from $y$ and the point $(4,-8)$.
Its left endpoint $(-26,18)$ is equidistant from $y$ and the point $(0,0)$,
whose Voronoi cell is discussed in Remark \ref{rem:singular}.

The cuspidal cubic $X$ is very special.
If we replace $X$ by a general cubic (defined over $\QQ$) in the affine plane, then $\Vor_I(y) $ is generated 
modulo $N_I(y)$ by an irreducible polynomial of degree eight in
$\QQ[u_2]$.
Thus, the expected Voronoi degree of (affine) plane cubics is 
$\delta_X(y) = 8$.
\end{example}

\begin{figure}[htb]
  \centering
  \includegraphics[width=270pt]{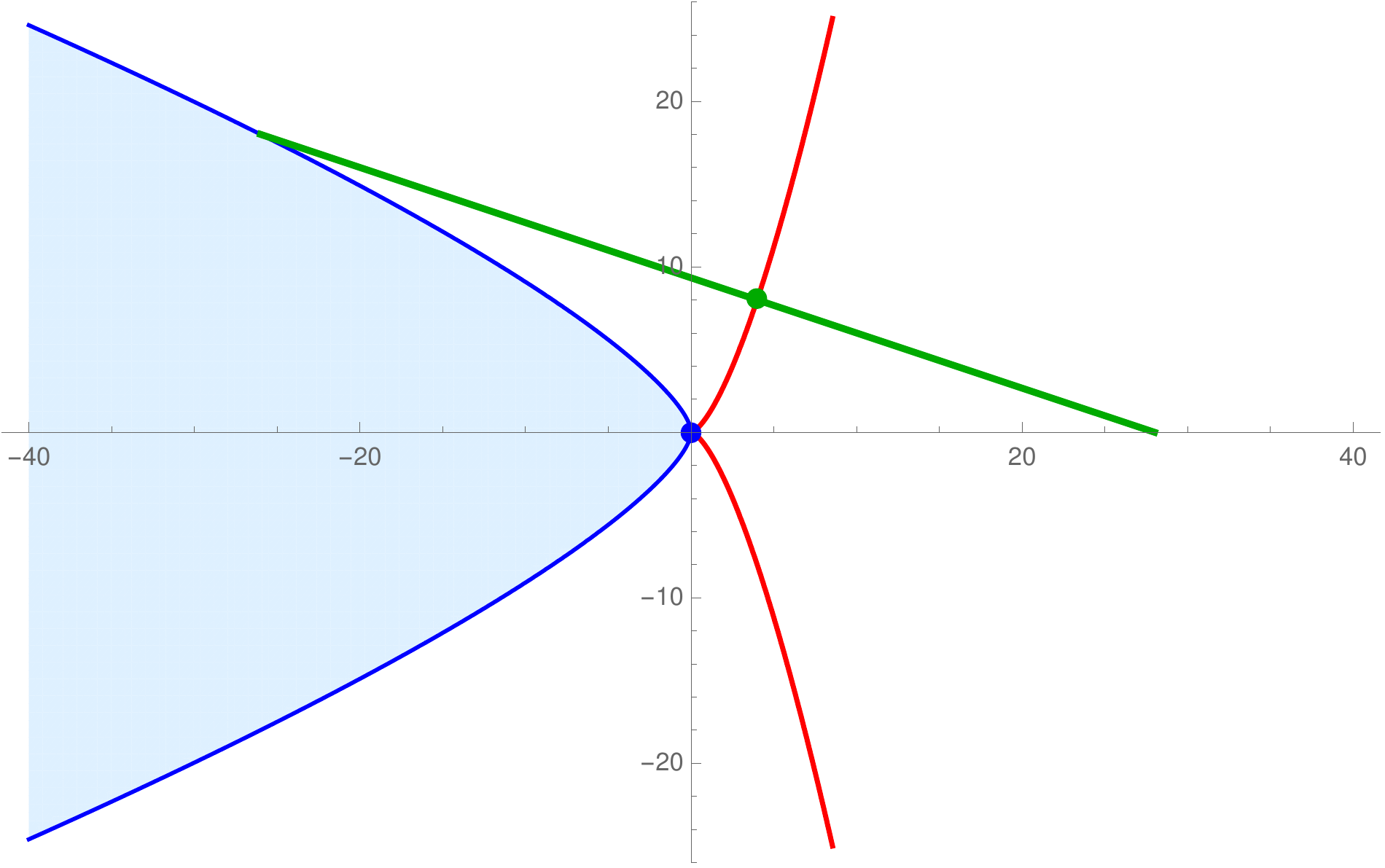}
  \caption{\label{fig:cuspidalcubic} The cuspidal cubic is shown in red.
  The Voronoi cell of a smooth point is a green line segment.
  The Voronoi cell of the cusp is the convex region bounded by the blue curve.}
\end{figure}

\begin{remark}[Singularities] \label{rem:singular} \rm
Voronoi cells at singular points can be computed with the same procedure as above.
However, these Voronoi cells generally have higher dimensions.
For an illustration, consider the cuspidal cubic, and let $y\!=\!(0,0)$ be the cusp.
A Gr\"obner basis computation  yields the Voronoi boundary
$\,27u_2^4+128u_1^3+72u_1u_2^2+32u_1^2+u_2^2+2u_1 $.
The Voronoi cell is the two-dimensional convex region bounded by this quartic,
shown in blue in Figure~\ref{fig:cuspidalcubic}.
The Voronoi cell might also be empty at a singularity.
This happens for instance for $V(x_1^3 + x_1^2 - x_2^2)$, which has an ordinary double point at $y\!=\!(0,0)$.
In general, the cell dimension depends on both the embedding dimension and the branches of the singularity.
\end{remark}

In this paper we restrict ourselves to Voronoi cells at points $y$
that are nonsingular in the given variety $X = V(I)$.  
Proposition~\ref{prop:algorithm} gives an algorithm for computing
the  Voronoi ideal $\,\Vor_I(y)$. We implemented it in {\tt Macaulay2} \cite{M2}
and experimented with numerous examples. 
For small enough instances, the computation terminates and we obtain the defining polynomial of
the Voronoi boundary $\partial_{\rm alg} \! \Vor_X(y)$.
This polynomial is unique modulo the linear ideal of the normal space $N_I(y)$.
For larger instances, we can only compute the degree of 
$\partial_{\rm alg} \! \Vor_X(y)$ but not its equation.
This is done by working over a finite field and adding  $c-1$ random linear equations in $u_1,\ldots,u_n$
in order to get a zero-dimensional polynomial system. 

Our experiments were most extensive for the case of hypersurfaces $(c=1)$.
We sampled random polynomials $f$ of degree $d$ in  $\QQ[x_1,\ldots,x_n]$, both inhomogeneous and homogeneous. 
These were chosen among those that vanish at a
preselected point $y$ in $\QQ^n$.
In each iteration, the resulting Voronoi ideal $\Vor_I(y)$ 
from  (\ref{eq:voonoiideal}) was found to be zero-dimensional.
In fact, $\Vor_I(y)$ is a maximal ideal in $\QQ[u_1,\ldots,u_n]$, 
and $\delta_X(y)$ is the degree of the associated field extension.
We summarize our results in Tables~\ref{tab:voronoi} and~\ref{tab:voronoihom},
 and we extract conjectural formulas.

\begin{table}[htb]
  \centering
  \setlength{\tabcolsep}{3pt}
    \begin{tabular}{c|*{10}{c}}
    \toprule
    $n\backslash d$ & \;2\; & 3 & 4 & 5 & 6 & 7 & 8 & $
    \delta_{X}(y) = 
    {\rm degree}(\Vor_{\langle f \rangle}(y))$    \\
    \midrule
    1 & 1 & 2 & 3& 4 & 5 & 6 & 7 
   & $d{-}1$  \\
    2 & 2 & 8 & 16 & 26 & 38 & 52 & 68 
   & $d^2{+}d{-}4$  \\
    3 & 3 & 23 & 61 & 123 & 215 & 343& 
    & $d^3{+}d{-}7$  \\
    4 & 4 & 56 & 202 & 520 & 1112  & & 
    & $d^4{-}d^3{+}d^2{+}d{-}10$  \\
    5 & 5 & 125 & 631 & & & & 
    & $d^5{-}2d^4{+}2d^3{+}d{-}13$ \\
    6 & 6 & 266 & 1924 & & & & 
    & $d^6{-}3d^5{+}4d^4{-}2d^3{+}d^2{+}d{-}16$  \\
    7 & 7 & 551 & & & & & 
    & $d^7{-}4d^6{+}7d^5{-}6d^4{+}3d^3{+}d{-}19$ \\
    \bottomrule
    \end{tabular}%
      \caption{\label{tab:voronoi}
      The Voronoi degree of an inhomogeneous polynomial $f$ of degree $d$ in $\RR^n$.}
\end{table}

\begin{table}[htb]
  \centering
  \setlength{\tabcolsep}{3pt}
    \begin{tabular}{c|*{10}{c}}
    \toprule
    $n\backslash d$ & \;2\; & 3 & 4 & 5 & 6 & 7 & 8 & 
    $  \delta_{X}(y) = {\rm degree}(\Vor_{\langle f \rangle}(y))$ \\
        \midrule
    2 & 2 & 4 & 6& 8 & 10 & 12 & 14 
   & $2d{-}2$ \\
    3 & 3 & 13 & 27 & 45 & 67 & 93 & 123
    & $2d^2{-}5$ \\
    4 & 4 & 34 & 96 & 202 & & &
    & $2d^3{-}2d^2{+}2d{-}8$\\
    5 & 5 & 79 & 309 & & & & 
& $2d^4{-}4d^3{+}4d^2{-}11$ \\
    6 & 6 & 172 & & & & & 
& $2d^5{-}6d^4+8d^3{-}4d^2{+}2d{-}14$ \\
    7 & 7 & 361 & & & & & 
& $2d^6{-}8d^5{+}14d^4{-}12d^3{+}6d^2{-}17$ \\
    \bottomrule
    \end{tabular}%
        \caption{\label{tab:voronoihom}
      The Voronoi degree of a homogeneous polynomial $f$ of degree $d$ in $\RR^n$.}
\end{table}

\begin{conjecture}
The Voronoi degree of a generic hypersurface of degree $d$ in $\RR^n$~equals
$$
    (d-1)^n + 3(d-1)^{n-1} + \tfrac{4}{d-2}((d-1)^{n-1}-1) - 3n.
$$
The Voronoi degree of the cone of a generic homogeneous polynomial of degree $d$ in $\RR^n$~is
$$  2(d-1)^{n-1} + \tfrac{4}{d-2}((d-1)^{n-1}-1) - 3n + 2. $$
\end{conjecture}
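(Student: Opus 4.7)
The plan is to extend the intersection-theoretic approach of Section~\ref{s:6} from curves and surfaces $(\dim X \leq 2)$ to arbitrary hypersurfaces. For the affine case with $X = V(f)$, the Voronoi degree counts, as $u$ varies over the normal line $N_X(y)$, the number of second critical points $x \in X \setminus \{y\}$ of the squared distance function from $u$ that are equidistant from $u$ as $y$ is. Writing $u = y + t\,\nabla f(y)$, the defining conditions are (i)~$f(x) = 0$, (ii)~$\nabla f(x)$ is parallel to $x - y - t\,\nabla f(y)$, and (iii)~$\|x - y - t\,\nabla f(y)\|^2 = t^2 \|\nabla f(y)\|^2$. These give $1 + (n{-}1) + 1 = n+1$ equations in the $n+1$ unknowns $(x,t)$, so the solution scheme is expected to be zero-dimensional, and the Voronoi degree is the number of solutions with $x \neq y$.

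To extract the number symbolically, I would compactify the locus $\{(x,t) : f(x)=0\}$ inside $\PP^n \times \PP^1$ and realize each condition as a class in the Chow ring. Condition (ii) is a rank-one condition on the $n \times 2$ matrix whose rows are $\nabla f(x)$ and $x - y - t\,\nabla f(y)$; unwinding the associated determinantal ideal via the Euler sequence on $\PP^n$ produces a Segre class whose top-degree part in $d$ is $(d-1)^n$, matching the leading term of the conjectured formula. The intermediate polar loci, obtained inductively by restricting to hyperplane sections through $y$, contribute the geometric-series piece $4 \sum_{k=0}^{n-2}(d-1)^k = \frac{4}{d-2}\bigl((d-1)^{n-1}-1\bigr)$ and account for the second-order tangency contributions. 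The quadric from condition (iii) then multiplies each class by a factor of degree~$2$ intersected with residual divisors of degree independent of $d$, which explains the coefficient $3$ of $(d-1)^{n-1}$.

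The hardest step will be the excess intersection analysis along the trivial component $\{x = y\}$, which automatically satisfies (i)--(iii) and must be removed with the correct multiplicity. I would use a Fulton--MacPherson residual intersection along this diagonal locus, expressing the unwanted contribution in terms of the Segre class of the normal cone of $\{x=y\}$ inside the ambient product. I expect the correction to depend only on the embedded second fundamental form of $X$ at $y$, and to combine with contributions at the hyperplane at infinity of $\PP^n$ and at $t = \infty$ to yield the linear $-3n$ term.

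The homogeneous (cone) case is handled analogously, but now the extra $\mathbb{G}_m$-action on $X = V(f)$ with $f$ homogeneous implies that the gradient map $x \mapsto \nabla f(x)$ factors through $\PP^{n-1}$, so condition~(ii) drops in expected codimension by one and the leading term $(d-1)^n + 3(d-1)^{n-1}$ is replaced by $2(d-1)^{n-1}$; the remaining polar-series contribution $\frac{4}{d-2}\bigl((d-1)^{n-1}-1\bigr)$ is unchanged, with the affine $-3n$ correction becoming $-3n+2$ because the cone point itself absorbs part of the diagonal excess. The main obstacle throughout is not the Chern-class bookkeeping itself but the verification that, for generic $f$ and $y$, no additional non-reduced components appear beyond those detected above; this is already subtle in the surface case treated in Section~\ref{s:6} and I would expect genuinely new degenerations to require case-by-case treatment in higher dimension.
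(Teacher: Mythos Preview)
The statement you are addressing is a \emph{Conjecture} in the paper, not a theorem. Immediately after stating it, the authors write that they prove both parts only for $n \leq 3$ in Section~\ref{s:6}, and that ``in general, for $n \geq 4$ and $d \geq 3$, the problem is still open.'' So there is no proof in the paper for you to match in arbitrary dimension. The paper's argument for $n \leq 3$ is the Euler-characteristic-of-a-fibration method: pull back the pencil of quadrics $Q_{(t:u)}$ to a blow-up $\tilde X$, and solve for the Voronoi degree $v$ from the identity~\eqref{eq:euler} by computing $\chi(\tilde X)$, $\chi(X_{gen})$, $\chi(X_s)$, and $\chi(X_{(0:1)})$ directly.

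Your proposal takes a genuinely different route: rather than fibering over $\PP^1$ and comparing Euler numbers, you would work in the Chow ring of a compactification of the incidence locus in $\PP^n \times \PP^1$ and remove the excess along $\{x=y\}$ via Fulton--MacPherson residual intersection. This is a reasonable strategy, and it could in principle succeed where the fibration method stalls, since for $\dim X \geq 3$ the singular fibers of $q$ need not have a single ordinary node, which breaks the simple accounting in~\eqref{eq:euler}.

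That said, what you have written is a research plan, not a proof. You do not actually derive the terms $(d-1)^n$, $3(d-1)^{n-1}$, or the geometric series $\tfrac{4}{d-2}\bigl((d-1)^{n-1}-1\bigr)$ from any Chow-ring computation; you reverse-engineer them from the conjectured formula and assign them to plausible sources (``unwinding the determinantal ideal via the Euler sequence \ldots\ produces a Segre class whose top-degree part is $(d-1)^n$'', ``intermediate polar loci \ldots\ contribute the geometric-series piece''). The residual contribution at $\{x=y\}$ and at infinity is likewise described only qualitatively. You acknowledge this yourself in the final paragraph. Since the paper leaves the conjecture open in this generality, your sketch neither matches nor diverges from a paper proof---there is none---and to upgrade it to a proof you would have to carry out the Segre-class and residual computations explicitly and verify the transversality hypotheses you flag.
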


\smallskip

We shall prove both parts of this conjecture for $n \leq 3$ in Section~\ref{s:6},
where we develop the geometric theory
of Voronoi degrees of low-dimensional varieties.
The case $d=2$ was analyzed in~\cite[Proposition~5.8]{CHS}.
In general,  for $n \geq 4$ and $d\geq 3$, the problem is still open.

\section{Low Rank Matrices}\label{s:3}

There are several natural norms on  the space $ \RR^{m \times n}$
of real $m \times n$ matrices.
We focus on two of these norms.
First, we have the  {\em Frobenius norm}
$\,\|U\|_F := \sqrt{\sum_{ij} U_{ij}^2}$.
And second, we have the {\em spectral norm} $\,\|U\|_2 := \max_i \sigma_i(U)\,$ which 
extracts the largest singular value.

\smallskip

Let $X $ denote the variety of real $m \times n$ matrices of rank $\leq r$.
Fix a rank $r$ matrix $V $ in $ X$. This is a nonsingular point in $X$.
We consider  the Voronoi cell  $\Vor_X(V)$ with respect to the
Frobenius norm. This is consistent with the previous sections
because the Frobenius norm agrees
with Euclidean norm on $\RR^{m \times n}$.
This identification will no longer be valid after Remark
\ref{rem:instructive} when we restrict to  the subspace of symmetric matrices.

\smallskip

Let $U \in \Vor_X(V)$, i.e.~the closest point to~$U$ in 
the rank $r$ variety $X$ is the matrix~$V$.
By the Eckart-Young Theorem, the matrix $V$ is derived from $U$ by
computing the singular value decomposition $ \,U = \Sigma_1 \, D \, \Sigma_2$.
Here $\Sigma_1$ and $\Sigma_2$ are orthogonal matrices
of size $m \times m $ and $n \times n$ respectively, and $D$ is a nonnegative
diagonal matrix whose entries are the singular values.
Let $D^{[r]}$ be the matrix that is obtained from $D$ by replacing all
singular values except for the $r$ largest ones by zero.
Then, according to  Eckart-Young, we have
$\,V = \Sigma_1 \cdot D^{[r]} \cdot \Sigma_2$.

\begin{remark} \rm
The Eckart-Young Theorem works for both the Frobenius norm and the spectral norm.
This means that $\Vor_X(V)$ is also the Voronoi cell for the spectral norm.
\end{remark}

The following is the main result in this section.

\begin{theorem} \label{thm:determinantal}
Let $V$ be an $m \times n$-matrix of rank $r$.
The Voronoi cell $\Vor_X(V)$
 is congruent up to scaling to the unit ball in the spectral norm on the
space of $\,(m-r) \times (n-r)$-matrices.
\end{theorem}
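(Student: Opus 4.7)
The plan is to exhibit an explicit linear isometry between the normal space $N_X(V)$ and $\RR^{(m-r)\times(n-r)}$ under which the Voronoi cell corresponds to a spectral-norm ball of radius $\sigma_r$, the smallest nonzero singular value of $V$.

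First I would fix a singular value decomposition $V = P_1 D_r P_2^T$, where $P_1 \in \RR^{m \times r}$ and $P_2 \in \RR^{n \times r}$ have orthonormal columns spanning the column space and row space of $V$, and $D_r = \mathrm{diag}(\sigma_1, \ldots, \sigma_r)$ with $\sigma_1 \geq \cdots \geq \sigma_r > 0$. Choose orthogonal completions $P_1^\perp \in \RR^{m\times(m-r)}$ and $P_2^\perp \in \RR^{n\times(n-r)}$. A direct tangent-space calculation shows that the tangent space to $X$ at $V$ consists of matrices whose $(P_1^\perp, P_2^\perp)$-block vanishes, so the normal space is
\[
N_X(V) \,=\, V \,+\, \bigl\{\, P_1^\perp\, C \,(P_2^\perp)^T \,:\, C \in \RR^{(m-r)\times(n-r)} \,\bigr\}.
\]
The map $C \mapsto P_1^\perp C (P_2^\perp)^T$ is a linear isometry (Frobenius on the source to Frobenius on the target), since $P_1^\perp$ and $P_2^\perp$ have orthonormal columns.

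Next I would parametrize points of $N_X(V)$ by $U = V + P_1^\perp C (P_2^\perp)^T$ and compute the singular values of $U$. The key observation is that the two summands are supported on orthogonal column subspaces (and orthogonal row subspaces): the first lives in $\image(P_1) \otimes \image(P_2)$ and the second in $\image(P_1^\perp) \otimes \image(P_2^\perp)$. Consequently the singular values of $U$ are precisely the concatenation of the singular values $\sigma_1,\ldots,\sigma_r$ of $V$ with the singular values $\tau_1 \geq \tau_2 \geq \cdots$ of $C$.

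Now I would invoke Eckart–Young: the unique best rank-$\leq r$ approximation of $U$ is obtained by keeping its top $r$ singular values. Therefore $V$ is the Eckart–Young truncation of $U$ if and only if every singular value of $C$ is bounded above by $\sigma_r$, that is, $\|C\|_2 \leq \sigma_r$. Translating back through the isometry, this identifies
\[
\Vor_X(V) \;=\; V \,+\, \bigl\{\, P_1^\perp\, C\,(P_2^\perp)^T \,:\, \|C\|_2 \leq \sigma_r \,\bigr\},
\]
which is congruent, via the isometry and the scaling $C \mapsto C/\sigma_r$, to the unit spectral-norm ball in $\RR^{(m-r)\times(n-r)}$.

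The main obstacle is justifying the singular-value concatenation cleanly; once one notices the block-diagonal structure of $U$ in the orthonormal basis $(P_1,P_1^\perp) \otimes (P_2,P_2^\perp)$, this is immediate, but it is the one place where care is needed. A secondary subtlety is that Eckart–Young guarantees $V$ is \emph{a} best rank-$r$ approximation precisely under the condition $\tau_1 \leq \sigma_r$ (with uniqueness failing exactly on the boundary $\tau_1 = \sigma_r$), which is consistent with $\partial \Vor_X(V)$ being the spectral unit sphere scaled by $\sigma_r$.
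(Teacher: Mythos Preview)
Your proposal is correct and follows essentially the same approach as the paper: both identify the normal space as the ``complementary'' $(m-r)\times(n-r)$ block in an SVD-adapted basis, observe that a normal vector $U$ is block-diagonal so its singular values split as $\{\sigma_1,\ldots,\sigma_r\}\cup\{\tau_1,\ldots\}$, and then invoke Eckart--Young to conclude $V$ is a nearest rank-$r$ point iff $\|C\|_2\le\sigma_r$. The only cosmetic difference is that the paper first uses orthogonal invariance to reduce to the case where $V$ is literally diagonal, whereas you carry the orthonormal frames $P_i,P_i^\perp$ throughout; these are equivalent formulations of the same argument.
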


Before we present the proof, let us first see why the statement makes sense.
The determinantal variety $X$ has dimension $rm+rn-r^2$ in an ambient space of dimension $mn$.
The dimension of the normal space at a point is the difference of these two numbers,
so it equals $(m-r)(n-r)$.
Every Voronoi cell is a full-dimensional convex body in the normal space.
Next consider the case $m=n$ and restrict to the space of diagonal matrices.
Now $X$ is the set of vectors in $\RR^n$ having at most $r$ nonzero coordinates.
This is a reducible variety with $\binom{n}{r}$ components, each a coordinate subspace.
For a general point $y$ in such a subspace, the Voronoi cell $\Vor_X(y)$ is a convex polytope.
It is congruent to a regular cube of dimension $n-r$, which is the unit ball in the $L^\infty$-norm on $\RR^{n-r}$.
Theorem~\ref{thm:determinantal}
describes the orbit of this picture under the
action of the two orthogonal groups on $\RR^{m \times n}$.
For example, consider the special case $n=3, r=1$.
Here, $X$ consists of the three coordinate axes in $\RR^3$. 
The Voronoi decomposition of this curve decomposes
$\RR^3$ into squares, each normal to a different point on the three lines.
The image of this picture under orthogonal transformations is 
the Voronoi decomposition of $\RR^{3 \times 3}$ associated with the
affine variety of rank $1$ matrices.
That variety has dimension $5$, and each Voronoi cell is a $4$-dimensional convex body in the normal space.

\begin{proof}[Proof of Theorem~\ref{thm:determinantal}]
The Voronoi cell is invariant under orthogonal transformations.
We may therefore assume that the matrix $V = (v_{ij})$ satisfies
$v_{11} \geq v_{22} \geq \cdots \geq v_{rr} = u > 0$ and
$v_{ij} = 0$ for all other entries.
The Voronoi cell of the diagonal matrix 
$V$ consists of matrices $U$ whose block-decomposition
into $r + (m-r)$ rows and $r + (n-r)$ columns satisfies
$$
\begin{pmatrix} I & 0 \\
0 & T_1 \end{pmatrix}
 \cdot  
 \begin{pmatrix} U_{11} & U_{12} \\
 U_{21} & U_{22} \end{pmatrix}
  \cdot
\begin{pmatrix} I & 0 \\
0 & T_2 \end{pmatrix}\,\,=\,\,
 \begin{pmatrix} V_{11} & 0 \\
 0 & V_{22} \end{pmatrix}.
$$
Here $V_{11} = {\rm diag}(v_{11},\ldots,v_{rr})$ agrees with the upper $r \times r$-block of $V$,
and $V_{22}$ is a diagonal matrix whose entries are bounded
above by $u$ in absolute value.
This implies  $U_{11} = V_{11}$,
$U_{12} = 0$, $U_{21} = 0$, and $U_{22}$ is
an arbitrary $(m-r) \times (n-r)$ matrix
with spectral norm at most $u$.
Hence the Voronoi cell of $V$ is congruent to the set of all such matrices $U_{22}$.
This convex body equals $u$ times the  unit ball in
 $\RR^{(m-r) \times (n-r)}$ under the  spectral norm.
\end{proof}

\begin{remark} \label{rem:instructive} \rm
It is instructive to compare the 
Voronoi degree with the \emph{Euclidean distance degree} (ED degree).
Assume  $m \leq n$ in Theorem~\ref{thm:determinantal}.
According to \cite[Example~2.3]{DHOST},
the ED degree of the determinantal
variety $X$ equals $\binom{m}{r}$. On the other hand, 
the Voronoi degree of $X$ is $2(m-r)$. Indeed, we have shown that
the Voronoi boundary is isomorphic to the hypersurface
$\,\{ {\rm det}(W W^T -  I_{m-r}) = 0\}$,
where $W$ is an $(m{-}r) \times (n{-}r)$ matrix of unknowns.
\end{remark}

Our problem becomes even more interesting when we restrict 
to matrices in a linear subspace. To see this, let
$X$ denote the variety of symmetric $n\times n$ matrices of rank $\leq r$.
We can regard $X$ either as a variety in the ambient matrix space $\RR^{n \times n}$
or in the space $\RR^{\binom{n+1}{2}}$ whose coordinates are the
upper triangular entries of a symmetric matrix. On the latter space we have 
both the {\em Euclidean norm} and the {\em Frobenius norm}. 
These are now different!

The Frobenius norm on $\RR^{\binom{n+1}{2}}$ is
the restriction of the Frobenius norm on $\RR^{n \times n}$ to the
subspace of symmetric matrices. 
For instance, if $n=2$, we identify the vector $(a,b,c)$ with the symmetric matrix
$\bigl(\begin{smallmatrix}a&b\\b&c\end{smallmatrix}\bigr)$.
The Frobenius norm is $\sqrt{a^2{+}2b^2{+}c^2}$, whereas the Euclidean norm is $\sqrt{a^2{+}b^2{+}c^2}$.
The two norms have dramatically different
properties with respect to low rank approximation.
The Eckart-Young Theorem remains valid for the Frobenius norm on
$\RR^{\binom{n+1}{2}}$, but this is not true for the Euclidean norm
(cf.~\cite[Example~3.2]{DHOST}). In what follows we elucidate this
by comparing the Voronoi cells with respect to the two norms.

\begin{figure}[htb]
  \centering
  \includegraphics[width=170pt]{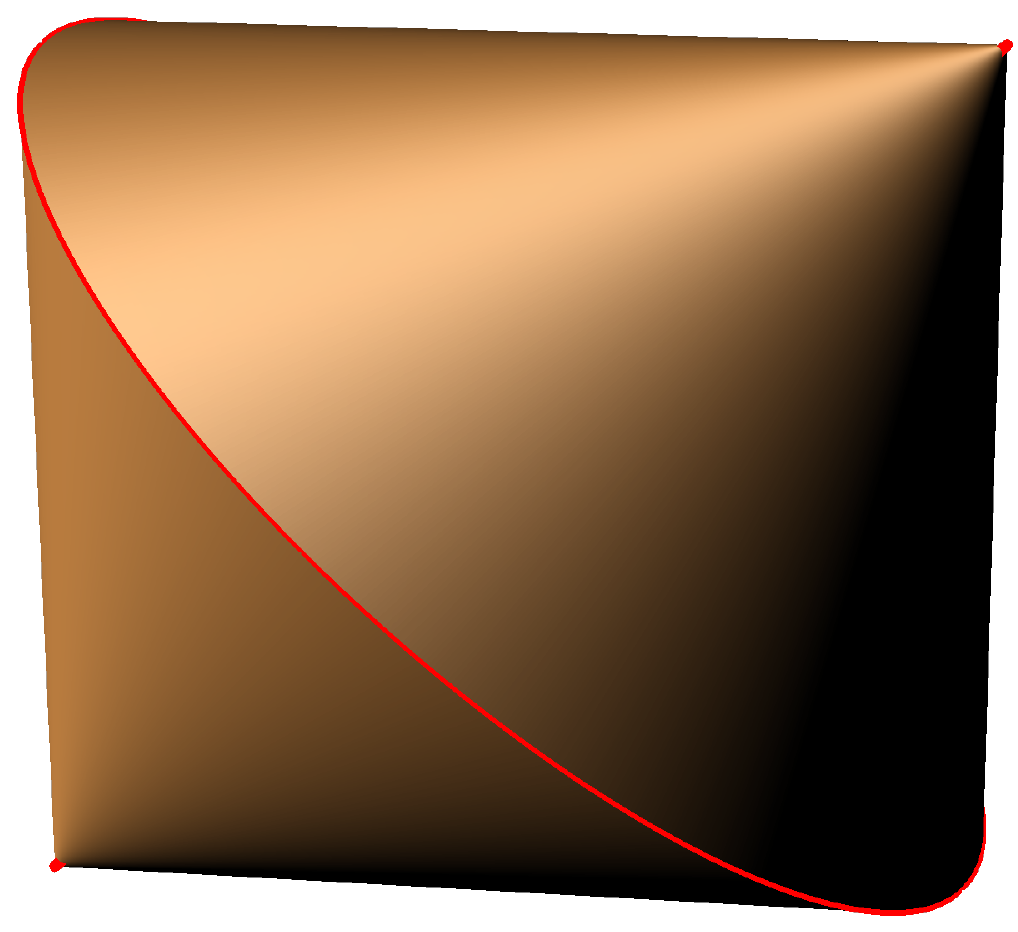} \qquad \qquad \qquad
  \includegraphics[width=170pt]{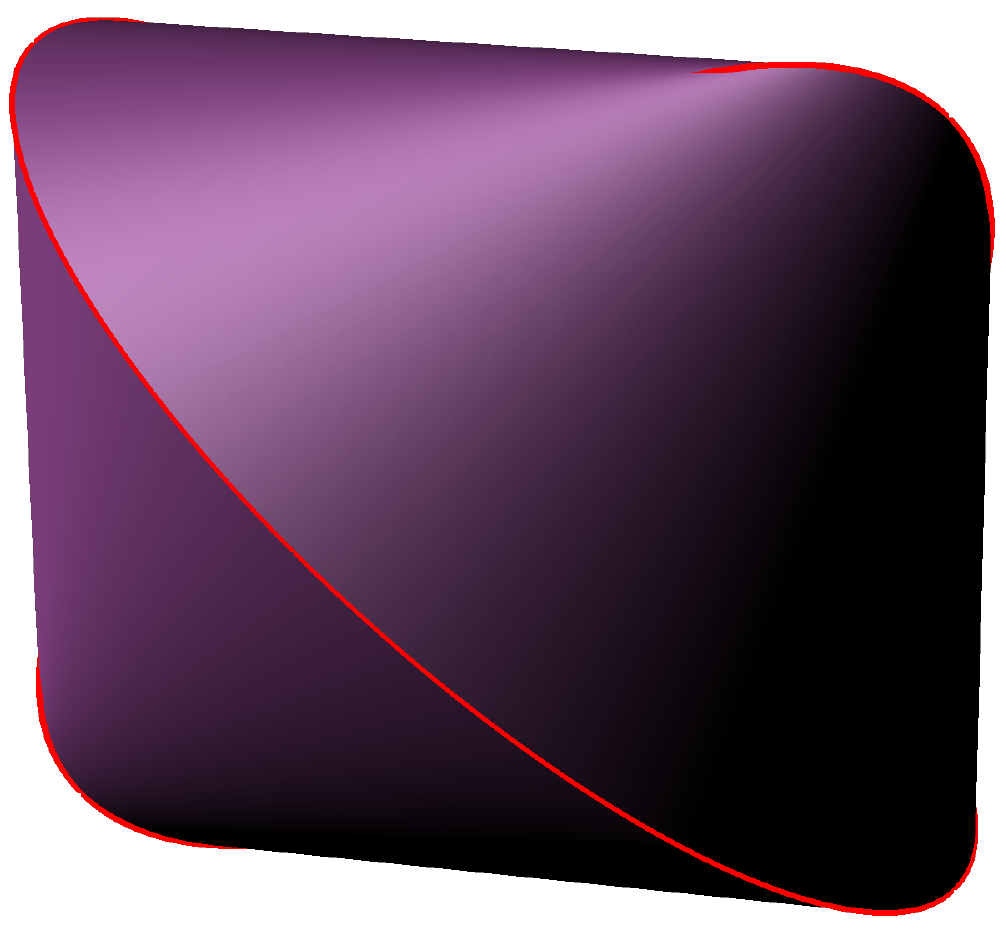}
  \caption{\label{fig:rank1matrices} The Voronoi cell of a symmetric $3 {\times} 3$ matrix of rank $1$  is a
   convex body of dimension $3$. It is shown for the Frobenius norm (left) and for
  the Euclidean norm (right).}
  \end{figure}

\begin{example} \label{ex:331} \rm Let $X$ be the variety of
symmetric $3\times 3$ matrices of rank $\leq 1$.
For the Euclidean metric,  $X$ lives in $\RR^6$.
For the Frobenius metric, $X$ lives in 
a $6$-dimensional  subspace of $\RR^{3\times 3}$.
Let $V$ be a regular point in $X$, i.e.~a symmetric $3 \times 3$ matrix of rank~$1$.
The normal space to $X$ at $V$ has dimension $3$.
Hence, in either norm, the Voronoi cell $\Vor_X(V)$  is a $3$-dimensional convex body.
Figure~\ref{fig:rank1matrices} illustrates these  bodies for our
two metrics.

For the Frobenius metric, the Voronoi cell is isomorphic to the set of matrices
$\left(\begin{smallmatrix}a&b\\b&c\end{smallmatrix}\right)$ with eigenvalues between $-1$ and~$1$.
This semialgebraic set is bounded by the surfaces defined by the singular quadrics
$\det\left(\begin{smallmatrix}a+1&b\\b&c+1\end{smallmatrix}\right)$ and 
$\det\left(\begin{smallmatrix}a-1&b\\b&c-1\end{smallmatrix}\right)$.
The Voronoi ideal is of degree~$4$, defined by the product of these two determinants (modulo the normal space).
The Voronoi cell is shown on the left in Figure~\ref{fig:rank1matrices}. 
It is the intersection of two quadratic cones.
The cell is the convex hull of the circle in which the two quadrics meet, together with the two vertices.

For the Euclidean metric, the Voronoi boundary at a generic point $V$
in $X$ is defined by an irreducible polynomial of degree~$18$ in $a,b,c$. 
In some cases, the Voronoi degree can drop.
For instance, consider the special rank $1$ matrix
$\,V =\left(\begin{smallmatrix}1&0&0\\0&0&0\\0&0&0\end{smallmatrix}\right)$.
For this point, the degree of the Voronoi boundary is only~$12$. 
This particular Voronoi cell is shown on the right in Figure~\ref{fig:rank1matrices}.
This cell is the convex hull of two ellipses, which are shown in red in the diagram.
\end{example}

\section{Spectrahedral Approximations of Voronoi Cells}\label{s:4}

Computing Voronoi cells of varieties is computationally hard.
In this section we introduce some tractable approximations to the Voronoi cell 
based on semidefinite programming (SDP).
More precisely, for a point $y\in X$ we will construct convex sets 
$\,\{S_X^d(y)\}_{d\geq 1}\,$ such that
\begin{align}\label{eq:inclusions}
  S_X^1(y) \,\subset \,S_X^2(y) \,\subset\, S_X^3(y)\, \subset\,\, \cdots \,\,\subset\, \Vor_X(y).
\end{align}
Here each  $S_X^d(y)$ is a spectrahedral shadow.
The construction is based on the sum-of-squares (SOS) hierarchy, also known as 
Lasserre hierarchy, for polynomial optimization problems~\cite{BPT}. This section is 
to be understood as a continuation of the studies undertaken in \cite{CAPT, CHS}.

Let $\SymRR^n$ denote the space of real symmetric $n\times n$ matrices.
Given $A,B\in \SymRR^n$, the notation $A\preceq B$ means that the 
matrix $B-A$ is positive semidefinite (PSD).
A \emph{spectrahedron} is the intersection of the cone of PSD matrices with an affine-linear space.
Spectrahedra are the feasible sets of SDP.
In symbols, a spectrahedron has the following form for some  $C_i\in \SymRR^n$:
\begin{align*}
  S \,\,\,:=\,\,\, \{ \,y \in \RR^m:  \,y_1 C_1 + \dots + y_m C_m \preceq C_0 \,\}.
\end{align*}
A \emph{spectrahedral shadow} is the image of an spectrahedron under an affine-linear map.
Using SDP one can efficiently maximize  linear functions over a spectrahedral shadow.

Our goal is to describe inner spectrahedral approximations of the Voronoi cells.
We first consider the case of \emph{quadratically defined varieties}. This is the setting
of \cite{CHS} which we now follow.
Let ${\bf f} = (f_1,\ldots,f_m)$ be a list of inhomogeneous quadratic polynomials in $n$ variables. We fix
$X = V({\bf f}) \subset \RR^n$ and we assume that $y$ is a nonsingular point in~$X$.
Let $A_i\in \SymRR^n$ denote the Hessian matrix of the quadric~$f_i$.
Consider the following spectrahedron:
\begin{align*}
  {\rm S}_{\bf f} \,\,:=\,\, \bigl\{ \lambda \in \RR^m : 
  \lambda_1 A_1 + \cdots + \lambda_m A_m  \preceq I_n \bigr\}.
\end{align*}
This was called the \emph{master spectrahedron} in \cite{CHS}.
Let $\Jac_{\bf f}$ be the Jacobian matrix of ${\bf f}$.
This is the $n\times m$ matrix with entries $\partial f_j /\partial x_i$.
The specialized Jacobian matrix $\Jac_{\bf f}(y)$ defines a linear map $\RR^m \to \RR^n$ whose range is the normal space of the variety at $y$. 
We define the set
\begin{align*}
  S^1_{\bf f}(y) \,:=\, y - \tfrac{1}{2} \Jac_{\bf f}(y) \cdot {\rm S}_{\bf f} \,\,\subset \,\, \RR^n.
\end{align*}
By construction, this is a  spectrahedral shadow. The following result was established in \cite{CHS}.

\begin{lemma}\label{thm:inclusions1}
The spectrahedral shadow $S^1_{\bf f}(y)$ is contained in the Voronoi cell $\,\Vor_X(y)$.
\end{lemma}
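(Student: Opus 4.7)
The plan is to exhibit an SOS (Lagrangian Positivstellensatz) certificate that certifies $y$ as a global minimizer of $\|x-u\|^2$ over $X=V({\bf f})$, for each $u \in S^1_{\bf f}(y)$. I would start by picking an arbitrary $\lambda \in {\rm S}_{\bf f}$ and setting $u := y - \tfrac{1}{2}\Jac_{\bf f}(y)\lambda$, and then aim at the identity
\[
\|x-u\|^2 - \|y-u\|^2 \;=\; \sigma(x) \,+\, \sum_{i=1}^m \lambda_i f_i(x),
\]
with $\sigma$ a quadratic form in $(x-y)$ that is pointwise nonnegative on $\RR^n$. Given this identity, restricting to any $x \in X$ kills the $f_i$'s, leaving $\|x-u\|^2 - \|y-u\|^2 = \sigma(x) \geq 0$, i.e.\ $u \in \Vor_X(y)$.

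The main calculation is direct expansion. On the left I would write $\|x-u\|^2 - \|y-u\|^2 = \|x-y\|^2 + 2(x-y)^T(y-u)$ and substitute $y-u = \tfrac{1}{2}\Jac_{\bf f}(y)\lambda$. On the right, each $f_i$ is quadratic with $f_i(y)=0$, so the Taylor expansion about $y$ reads
\[
f_i(x) \;=\; \nabla f_i(y)^T(x-y) \,+\, \tfrac{1}{2}(x-y)^T A_i (x-y).
\]
Forming $\sum_i \lambda_i f_i(x)$, the linear part becomes $(x-y)^T \Jac_{\bf f}(y)\lambda$, which exactly cancels the cross term $2(x-y)^T(y-u)$ on the left. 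What remains is
\[
\sigma(x) \;=\; (x-y)^T \Bigl[\, I_n - \tfrac{1}{2}\textstyle\sum_i \lambda_i A_i \,\Bigr](x-y).
\]

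The last step is to verify $\sigma \geq 0$, and this is exactly where the defining inequality of ${\rm S}_{\bf f}$ enters: $\sum_i \lambda_i A_i \preceq I_n$ gives $I_n - \tfrac{1}{2}\sum_i \lambda_i A_i \succeq \tfrac{1}{2} I_n \succeq 0$, so $\sigma$ is the quadratic form of a PSD matrix, hence SOS. I do not foresee a real obstacle; the conceptual content is that one and the same $\lambda$ plays a dual role---parameterizing the point $u \in S^1_{\bf f}(y)$ and simultaneously serving as the vector of Lagrange multipliers in the certificate---and the factor $\tfrac{1}{2}$ in the definition of $S^1_{\bf f}(y)$ is precisely what synchronizes the linear terms so that only a PSD quadratic remainder is left.
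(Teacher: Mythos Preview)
Your proposal is correct and is essentially the paper's own argument: the paper introduces the Lagrangian $L_\lambda(x)=\|x-u\|^2-\sum_i\lambda_i f_i(x)$, checks that its Hessian $2I_n-\sum_i\lambda_iA_i$ is PSD and that $\nabla L_\lambda(y)=0$, and concludes $L_\lambda(x)\geq L_\lambda(y)=\|y-u\|^2$, which on $X$ becomes $\|x-u\|^2\geq\|y-u\|^2$. Your explicit SOS identity $\|x-u\|^2-\|y-u\|^2=\sigma(x)+\sum_i\lambda_if_i(x)$ with $\sigma(x)=(x-y)^T\bigl[I_n-\tfrac12\sum_i\lambda_iA_i\bigr](x-y)$ is exactly the computation $L_\lambda(x)-L_\lambda(y)=\sigma(x)$, just written out rather than stated via convexity.
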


\begin{proof} We include the proof to better explain the situation.
  Let $u \in S^1_{\bf f}(y)$, so there exists $\lambda\in {\rm S}_{\bf f}$ with 
  $u = y - \frac{1}{2}\Jac(y)\lambda$.
  We need to show that $y$ is the nearest point from $u$ to the variety~$X$.
  Let $L(x,\lambda) = \|x-u\|^2 - \sum_i \lambda_i f_i(x)$ be the Lagrangian function, and let $L_\lambda(x)$ be the quadratic function obtained by fixing the value of $\lambda$.
  Observe that $L_\lambda(x)$ is convex, and its minimum is attained at~$y$.
  Indeed, 
  $\lambda\in {\rm S}_{\bf f}$ means that the Hessian of this function is positive semidefinite,
  and $\,u = y - \frac{1}{2}\Jac_{\bf f}(y)\lambda\,$ implies that $\nabla L_\lambda(y) = 0$.
  Therefore,
  \begin{align*}    \|y-u\|^2 \,=\,     L_\lambda(y)\, \,= \,\,\,
      \min_x L_\lambda(x)  \,  \leq  \, \min_{x\in X} \|x-u\|^2  \,\leq \, \|y-u\|^2.
  \end{align*}
  We conclude that $y$ is the minimizer of the squared distance function $x \mapsto \|x-u\|^2$ on $ X$.
\end{proof}

\begin{example}[\!\! {\cite[Ex~6.1]{CHS}}] \label{ex:twistedcubic} \rm
  Let $X\subset\RR^3$ be the twisted cubic curve, defined by 
  the two equations $x_2 {=} x_1^2$ and $x_3 {=} x_1x_2$.
  Both  $\Vor_X(0)$ and $S^1_{\bf f}(0)$ lie in the normal space at the origin, which is the plane $u_1{=}0$.
  The Voronoi cell is the planar convex set bounded   by the quartic curve
  $27 u_3^4{+}128 u_2^3{+}72 u_2 u_3^2{-}160 u_2^2{-}35 u_3^2{+}66 u_2 \!=\! 9$.
  The inner approximation $S^1_{\bf f}(0)$ is bounded by the parabola $2u_2 = 1 {-} u_3^2$.
  The two curves are tangent at the point $(0,\frac{1}{2},0)$.
\end{example}

\begin{figure}[h]
  \centering
  \includegraphics[width=150pt]{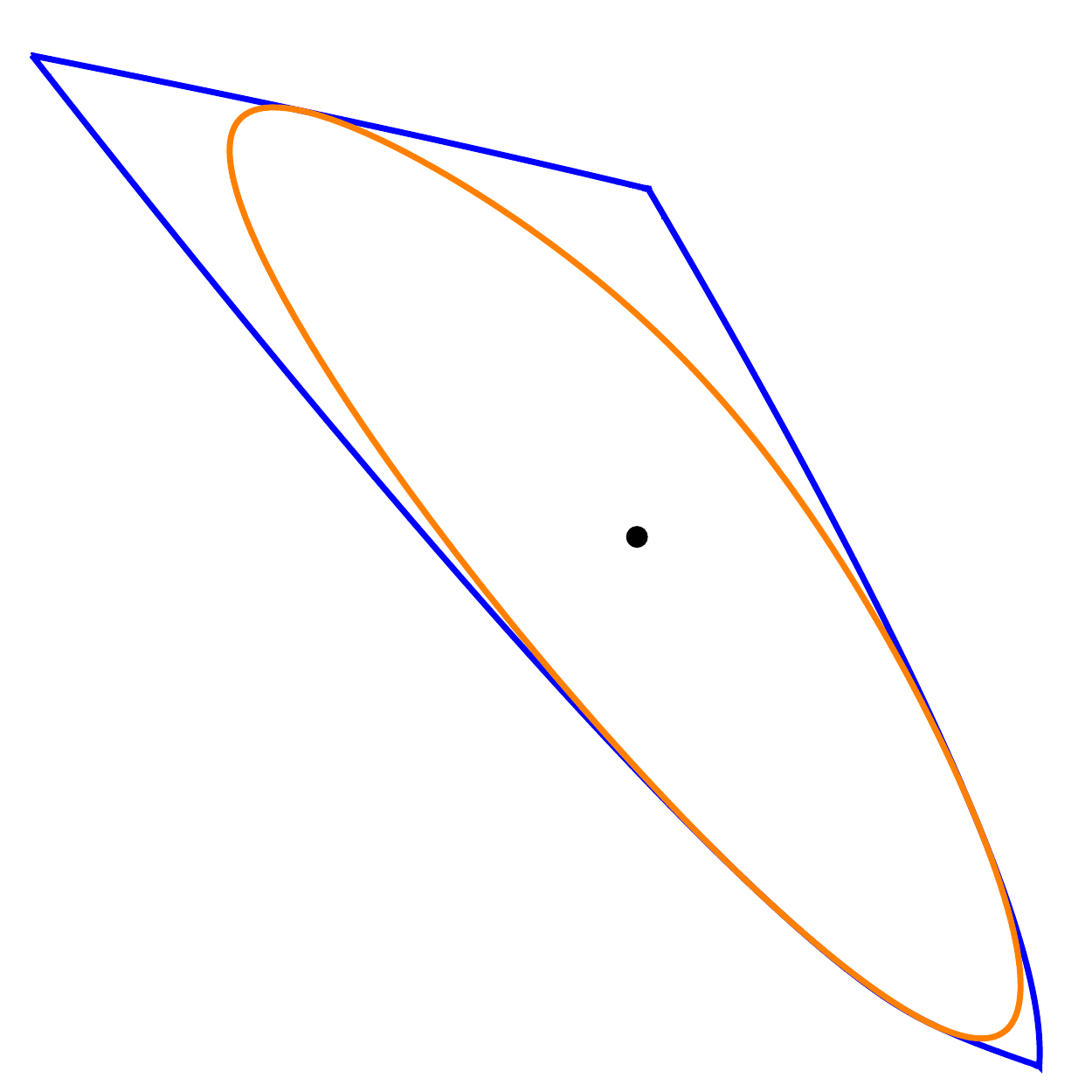}
  \caption{\label{fig:lasserre} The spectrahedral approximation 
  $S^1_{\bf f}(0)$ of the Voronoi cell $\Vor_X(0)$ shown in
  Figure~\ref{fig:spacecurve}. The boundaries of these two convex sets -- curves of degree $3$ and $12$ -- are tangent.}
\end{figure}

\begin{example} \rm
  Let $X\subset\RR^3$ be the quartic curve in Figure~\ref{fig:spacecurve}.  
  The Voronoi boundary is a plane curve of degree~$12$.
  The master spectrahedron ${\rm S}_{\bf f}$ is bounded by a cubic curve, as seen in~\cite[Example~5.2]{CHS}.
  The convex set $S_{\bf f}^1(y)$ is affinely isomorphic to 
  ${\rm S}_{\bf f}$, so it is also bounded by a cubic curve.
  Figure~\ref{fig:lasserre} shows the Voronoi cell and its inner spectrahedral approximation.
  Note that their boundaries are tangent.
\end{example}

The above examples motivate the following open problem. 

\begin{problem}
  Fix a quadratically defined variety $X=V({\bf f})$  in $\RR^n$. 
  Let $\partial_{\rm alg} \!\Vor_X(y)$ be the algebraic boundary of the Voronoi cell at $y \in X$ and let
  $\partial_{\rm alg} {S}_{\bf f}^1(y)$ 
  be its first spectrahedral approximation.
  Investigate the tangency behavior of these two hypersurfaces.
\end{problem}

This problem was studied in~\cite{CHS} for complete 
intersections of $n$~quadrics in~$\RR^n$.
Here, $X$~is a finite set, and it 
was proved in~\cite[Theorem~4.5]{CHS} that the Voronoi walls are tangent to the spectrahedral approximations. It would be desirable to better understand this fact.

\smallskip

We now shift gears, by allowing ${\bf f} = (f_1,\ldots,f_m)$ to be an arbitrary
tuple of polynomials~in $x = (x_1,\ldots,x_n)$. Fix $d\in \NN$ such that
 $\,\deg(f_i)\leq 2d \,$ for all~$i$.
We will construct a spectrahedral shadow $S^d_{\bf f}(y)\subset \RR^n$ that is contained in $\Vor_X(y)$.
The idea is to perform a change of variables that makes the 
constraints ${\bf f}$ quadratic, and  use the construction  above.

Let $\mathcal A := \{\alpha \!\in\! \NN^n: 0\!<\! \sum_i \alpha_i \!\leq\! d\}$.
This set consists of $N:= \binom{n+d}{d}\!-\!1$ nonnegative integer vectors.
We consider the $d$-th Veronese embedding of affine $n$-space into affine $N$-space:
\begin{align*}
  \nu_d: \RR^n \to \RR^N, \quad
  x = (x_1,\dots,x_n) \;\mapsto\; z = (z_\alpha)_{\alpha\in \mathcal A},
  \quad\text{ where }
  z_\alpha:= x^\alpha = x_1^{\alpha_1}\cdots x_n^{\alpha_n}.
\end{align*}
Among the entries of $z$ are the variables~$x_1,\dots,x_n$.
We list these at the beginning in the vector $z$.
The image of $\nu_d$ is the Veronese variety. 
It is defined by the quadratic equations
\begin{align}\label{eq:veronese}
  z_{\alpha_1}z_{\alpha_2} = z_{\beta_1}z_{\beta_2}
  \qquad\forall 
  \alpha_1,\alpha_2,\beta_1,\beta_2
  \;\text{ such that }\;
  \alpha_1+\alpha_2 = \beta_1+\beta_2.
\end{align}
Since the polynomial $f_i(x)$ has degree $\leq 2d$, there is a quadratic function $q_i(z)$ such that $f_i(x) = q_i(\nu_d(x))$. The Veronese image $\nu_d(X)$ is defined
by the quadratic equations $q_i(z)=0$ together with those in~\eqref{eq:veronese}.
We write ${\bf q}\subset \RR[z]$ for the (finite) set of all of these quadrics.

Each $q\in {\bf q}$ is a quadratic polynomial in $N$ variables.
Let $A_q \in \SymRR^{N}$ be its Hessian matrix.
Let 
$  C {:=} \left(\begin{smallmatrix}
    I_n & 0 \\ 0 & 0
  \end{smallmatrix}\right)
  \in \SymRR^{N} $
be the Hessian of the function $z \mapsto x^T x$.
The master spectrahedron is
\begin{align*}
  {\rm S}_{\bf q} \,\,:= \,\, \{\, \lambda \in \RR^{|\bf q|}\,
  :\, \sum_{q\in {\bf q}} \lambda_q A_q \preceq C \,\}.
\end{align*}
Let $J_{\bf q}(y) := \Jac_{\bf q}(\nu_d(y))$ be the Jacobian matrix of ${\bf q}$ evaluated at the point $\nu_d(y) \in \RR^N$.
This matrix has $N$~rows.
Let $J^n_{\bf q}(y)$ be the submatrix of $J_{\bf q}(y)$ 
that is given by the $n$~rows corresponding to the variables~$x_i$, and let 
$J^{N-n}_{\bf q}(y)$ be the submatrix given by the remaining $N\!-\!n$ rows.
We now consider the spectrahedral shadow
\begin{align*}
  S^d_{\bf f}(y) \,\,: = \,\, y \,-\, \tfrac{1}{2}\, J^n_{\bf q}(y) \,\cdot\, ({\rm S}_{\bf q} \cap \ker J^{N-n}_{\bf q}(y)) \,\,\,\subset \,\,\,\RR^n.
\end{align*}
This is obtained by intersecting
the spectrahedron ${\rm S}_{\bf q}$ with a linear subspace and then taking the image under an affine-linear map.
One can show the inclusions in~\eqref{eq:inclusions}  using  ideas similar to those in Lemma~\ref{thm:inclusions1}.
An alternative argument is given in the proof of
Corollary \ref{thm:inclusions}.

\begin{example}\rm
  Consider the cardioid curve $X= V((x_1^2{+}x_2^2{+}x_1)^2{-}x_1^2{-}x_2^2)$ in~$\RR^2$,
  shown in red in Figure~\ref{fig:cardiod}. See also
  \cite[Figure 1]{DHOST}.
   We compare the Voronoi cells with the spectrahedral relaxation of degree $d=2$.
  The Voronoi cell at the origin, a singular point, is the interior of the circle $C : \{u_1^2{+}u_2^2{+}u_1{=}0\}$.
  The Voronoi cell at a smooth point $y$ is contained in the normal line to $X$ at $y$.
  It is either a ray emanating from the circle $C$, or a line segment from $C$ to the $x$-axis.
  The spectrahedral shadow $S_{\bf f}^2(y)$ is the subset of the Voronoi cell outside of the cardioid.
  For instance, the Voronoi cell at $y=(0,1)$ is the ray 
  $\,\Vor_X(y) = \{(t,t{+}1): t \geq - \frac{1}{2}\}$, and its spectrahedral 
  approximation is the strictly smaller ray
   $\,S_{\bf f}^2(y) = \{(t,t{+}1): t \geq 0\}$.
\end{example}

\begin{figure}[htb]
  \centering
  \includegraphics[width=170pt]{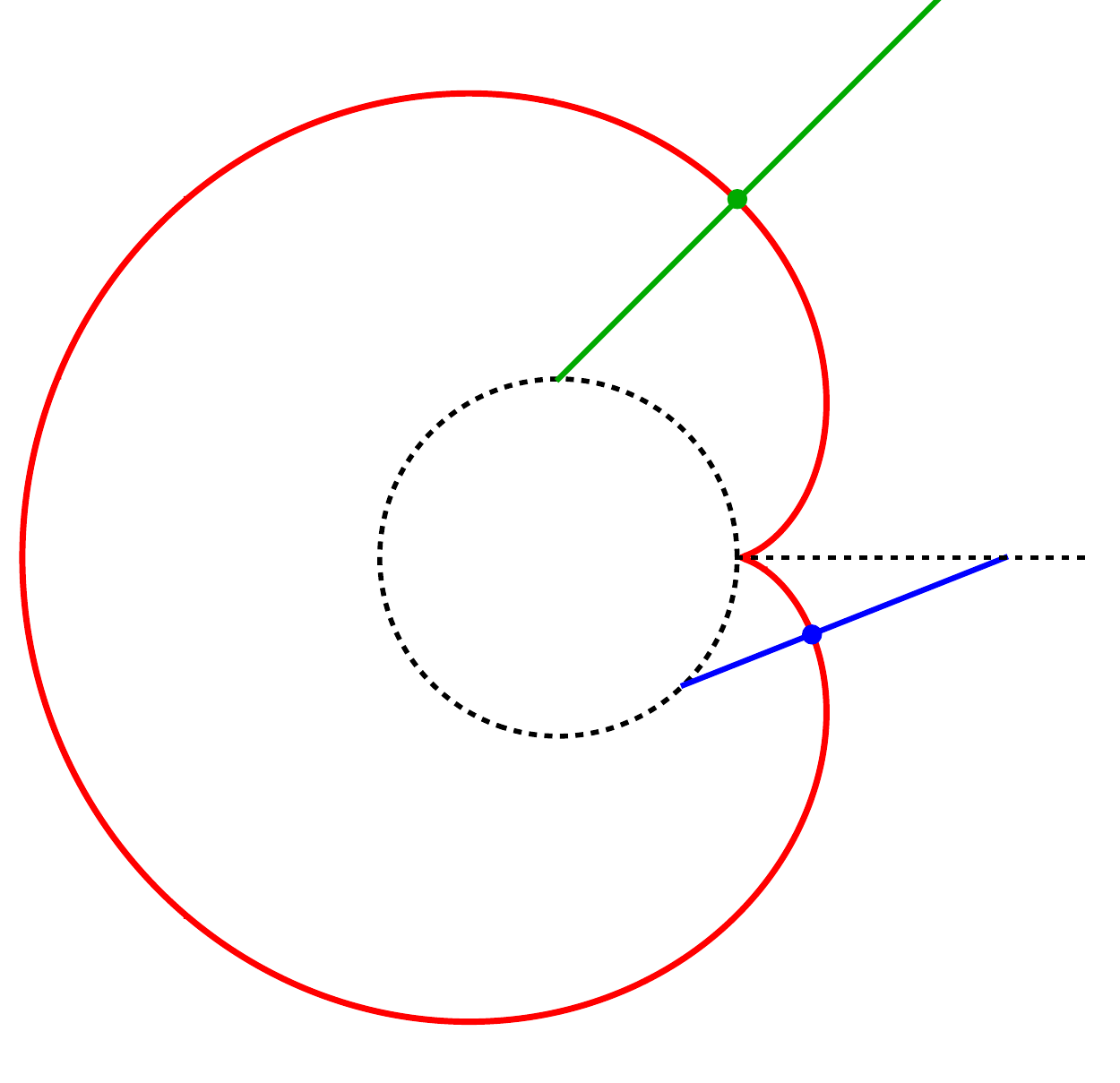}
  \caption{\label{fig:cardiod}  The red curve is the cardioid.
   The inner circle is the Voronoi cell
  of its singular point. Other Voronoi cells are either rays or line segments.
  These emanate from the circle. As they exit the cardioid, they enter the
     spectrahedral approximation to the Voronoi cell.}
  \end{figure}

Fix $u\in \RR^n$, and let $y$ be its nearest point on the variety $X$.
Though computing this nearest point~$y$ is hard in general, 
we can do it efficiently if $u$ lies in the interior of the spectrahedral shadow $S^d_{\bf f}(y)$
for some fixed $d$. Indeed, this is done by solving a certain SDP.

\begin{proposition}
  Consider the $d$-th level of the SOS 
  hierarchy for the optimization problem $\,\min_{x\in V({\bf f})}\|x-u\|^2$.
  A point $u$ lies in the interior of the spectrahedral shadow $S^d_{\bf f}(y)$ if and only if the $d$-th SOS relaxation exactly recovers~$y$ (i.e.~the moment matrix has rank one).
\end{proposition}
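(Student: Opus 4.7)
The plan is to translate the primal-dual structure of the $d$-th moment/SOS relaxation into the defining conditions of the spectrahedral shadow $S^d_{\bf f}(y)$ via the Veronese embedding $\nu_d$, and then read off both implications from SDP complementary slackness.

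First I will unpack what $u \in S^d_{\bf f}(y)$ means in Lagrangian terms. Consider
\begin{equation*}
  L_\lambda(z) \,:=\, \|x-u\|^2 \,-\, \sum_{q\in{\bf q}} \lambda_q\, q(z), \qquad \text{where } x = (z_{e_1},\ldots,z_{e_n}),
\end{equation*}
as a quadratic function of $z\in\RR^N$. Its Hessian is $2(C - \sum_q \lambda_q A_q)$, so $\lambda\in {\rm S}_{\bf q}$ is equivalent to $L_\lambda$ being convex. Using $C\nu_d(y) = (y,0,\ldots,0)$ and $q(\nu_d(y))=0$ for all $q\in {\bf q}$, a direct computation will show that $\nabla L_\lambda(\nu_d(y)) = 0$ splits along the two row-blocks of $J_{\bf q}(y)$ into $u = y - \tfrac{1}{2}J^n_{\bf q}(y)\lambda$ and $J^{N-n}_{\bf q}(y)\lambda = 0$, exactly the conditions defining $S^d_{\bf f}(y)$. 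Thus $u\in S^d_{\bf f}(y)$ iff there exists $\lambda\in {\rm S}_{\bf q}$ making $L_\lambda$ convex with critical point $\nu_d(y)$, in which case the minimum value $L_\lambda(\nu_d(y)) = \|y-u\|^2$ is attained.

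Next I will convert this into an SOS certificate. A Taylor expansion gives $L_\lambda(z) - \|y-u\|^2 = (z-\nu_d(y))^\top(C - \sum_q \lambda_q A_q)(z-\nu_d(y))$. Substituting $z = \nu_d(x)$ and using that Veronese quadrics vanish on $\nu_d(\RR^n)$ while the quadrics encoding the $f_i$ restrict to $f_i(x)$, this rewrites as
\begin{equation*}
  \|x-u\|^2 - \|y-u\|^2 \,=\, \sigma_\lambda(x) \,+\, \sum_i \mu_{\lambda,i}(x)\,f_i(x),
\end{equation*}
where $\sigma_\lambda$ is SOS of degree $\leq 2d$ (its Gram matrix in the basis $v(x)=(1,\nu_d(x))$ is essentially $C-\sum_q\lambda_q A_q$), and the $\mu_{\lambda,i}$ are polynomial multipliers of the expected degree obtained by absorbing the Veronese quadrics of ${\bf q}$. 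This identity is precisely a feasible dual point for the $d$-th SOS relaxation of $\min_{x\in V({\bf f})} \|x-u\|^2$, with value $\|y-u\|^2$.

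Both implications then follow from SDP complementary slackness. Strict interior $u\in{\rm int}(S^d_{\bf f}(y))$ corresponds to $C - \sum_q \lambda_q A_q$ being strictly positive definite on the complement of $\RR\cdot \nu_d(y)$, so the Gram matrix $Q$ of $\sigma_\lambda$ satisfies $\ker Q = \RR\cdot v(y)$; hence $QM^* = 0$ forces the primal optimal moment matrix to have $\rank(M^*)\leq 1$, which with $M^*_{00}=1$ pins down $M^* = v(y)v(y)^\top$ --- exact rank-one recovery of $y$. Conversely, rank-one recovery combined with strong duality and strict complementarity yields a dual Gram matrix with $\ker Q^* = \RR\cdot v(y)$, which unpacks through the first paragraph into a $\lambda^*\in{\rm int}({\rm S}_{\bf q})\cap\ker J^{N-n}_{\bf q}(y)$ certifying $u\in{\rm int}(S^d_{\bf f}(y))$. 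The main obstacle will be to justify strict complementarity at the SDP optimum, which is what converts ``rank-one primal'' into ``strictly feasible dual'' and conversely; for $u$ in the interior this is automatic by construction, whereas the reverse direction will require either a mild genericity hypothesis on $u$ or an explicit verification of Slater's condition, after which the argument reduces to bookkeeping between the Veronese dictionary and textbook SDP duality.
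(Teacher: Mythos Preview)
Your approach is essentially the same as the paper's: identify the $d$-th SOS relaxation with the Lagrangian dual of the QCQP in the lifted variables~$z$ with constraint set~${\bf q}$, and then match the conditions defining $S^d_{\bf f}(y)$ with those characterizing SDP-exactness. The paper does this in two sentences by citing the definition of the \emph{SDP-exact region} for QCQPs from \cite[Definition~3.2]{CHS} and asserting that it coincides with $S^d_{\bf f}(y)$; you instead unpack that coincidence explicitly via the Hessian/gradient computation and then argue both directions through complementary slackness.

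One comment on your reverse direction: you correctly flag strict complementarity as the crux in going from ``primal moment matrix has rank one'' to ``there exists a dual certificate with $\ker Q^\star = \RR\cdot v(y)$,'' which is what lands you in the \emph{interior} of $S^d_{\bf f}(y)$ rather than merely in $S^d_{\bf f}(y)$. The paper does not address this point directly either --- it is absorbed into the cited definition from~\cite{CHS}, where the SDP-exact region is defined precisely so that the equivalence holds. If you carry out your plan, you should either import that definition (in which case the proof collapses to the paper's two lines) or make explicit the standing assumption that strict complementarity holds at the optimum; without it, rank-one recovery could in principle occur at a boundary point of $S^d_{\bf f}(y)$, and your ``iff'' would only be an inclusion.
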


\begin{proof}
  The $d$-th level of the relaxation is obtained by taking the Lagrangian dual of the 
  quadratic optimization problem (QCQP) given by the quadrics in the set ${\bf q}$ above;
  see \cite{BPT}.
  The SDP-exact region in quadratic programming was formally defined in~\cite[Definition~3.2]{CHS}.
  It is straightforward to verify that this definition agrees with our description of~${S}_{\bf f}^d(y)$.
\end{proof}

\begin{corollary}\label{thm:inclusions}
  The inclusions
  $S^d_{\bf f}(y) \subset S^{d+1}_{\bf f}(y) \subset \Vor_X(y)$
  hold.
\end{corollary}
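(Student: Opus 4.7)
The plan is to derive both inclusions from the preceding proposition, which identifies the interior of $S^d_{\bf f}(y)$ with the region on which the $d$-th Lasserre relaxation of $\min_{x\in V({\bf f})}\|x-u\|^2$ is exact (in the rank-one sense). This identification reduces the corollary to two standard structural facts about the SOS hierarchy: it is monotone in $d$, and an exact (rank-one) moment solution at level $d$ extends to a rank-one moment solution at level $d+1$.

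For the outer inclusion $S^d_{\bf f}(y)\subset \Vor_X(y)$, I would take $u$ in the relative interior of $S^d_{\bf f}(y)$ (inside $N_X(y)$). By the proposition the $d$-th moment relaxation exactly recovers the Dirac measure $\delta_y$, so its optimal value equals $\|y-u\|^2$. Since every SOS relaxation is a lower bound on $\min_{x\in V({\bf f})}\|x-u\|^2$, and since $y\in V({\bf f})$ attains $\|y-u\|^2$, the point $y$ must be a global minimizer; hence $u\in\Vor_X(y)$. For the nesting $S^d_{\bf f}(y)\subset S^{d+1}_{\bf f}(y)$, again take $u$ in the relative interior of $S^d_{\bf f}(y)$. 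Given the exact rank-one pseudomoment sequence certifying $y$ at level $d$, extend it to a sequence of order $2(d+1)$ by setting $m_\beta := y^\beta$ for all new multi-indices $\beta$. This is the moment sequence of $\delta_y$, so the extended moment matrix is rank one and positive semidefinite, the localizing matrices for the $f_i$ vanish (since $f_i(y)=0$), and the objective value is still $\|y-u\|^2$, which equals the true optimum. Thus the level-$(d{+}1)$ relaxation is also exact at $u$, so by the proposition $u\in\text{int}\,S^{d+1}_{\bf f}(y)$.

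To finish, I would close up via the convexity of spectrahedral shadows. Both $S^d_{\bf f}(y)$ and $S^{d+1}_{\bf f}(y)$ are convex subsets of the normal space $N_X(y)$ with nonempty relative interior (take $\lambda=0$ in the definition to see that $y$ itself lies there), and $\Vor_X(y)$ is closed in $N_X(y)$ by the argmin definition (\ref{eq:voronoidef}). For any convex set $C$ in $\RR^c$ with nonempty relative interior one has $C\subset \overline{\operatorname{relint}(C)}$, so the interior-level inclusions of the previous paragraph propagate to the full sets, yielding $S^d_{\bf f}(y)\subset S^{d+1}_{\bf f}(y)\subset \Vor_X(y)$.

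The main obstacle I anticipate is bookkeeping rather than conceptual: one must verify that the ``rank-one extension'' really is a feasible moment sequence for the enlarged set ${\bf q}$ of Veronese-plus-vanishing quadrics at level $d+1$, and that the SDP-exactness definition used in the proposition is preserved under this extension. Once these routine checks (monotonicity of localizing matrices, stability of flat extensions, and equality of objective values) are recorded, the proof is a two-line consequence of the proposition together with the closure argument.
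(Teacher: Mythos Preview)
Your proposal is correct and follows essentially the same route as the paper's proof: both derive the inclusions from the preceding proposition by invoking (i) that exact recovery of $y$ forces $u\in\Vor_X(y)$, and (ii) that exactness of the $d$-th SOS relaxation implies exactness of the $(d{+}1)$-st. The paper compresses this into two sentences, whereas you spell out the rank-one extension of the moment sequence and add a closure-via-convexity step to pass from interiors to the full sets—details the paper leaves implicit.
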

\begin{proof}
  If the SOS relaxation recovers a point $u$, then it must lie in the Voronoi cell $ \Vor_X(y)$.
  And if the $d$-th SOS relaxation is exact then the $(d+1)$-st relaxation is also exact.
\end{proof}

\begin{example} \rm
  Consider the problem of finding the nearest point from a point $u\in \RR^2$ to the cardioid.
  By \cite[Example 1.1]{DHOST}, the ED degree is $3$.
    Here we consider the second SOS relaxation of the problem.
  We characterized the sets $S_{\bf f}^2(y)$ above.
  It follows that the second SOS relaxation solves the problem exactly if and only if $u$ lies on the outside of the cardioid.
\end{example}

\section{Formulas for Curves and Surfaces}\label{s:5}

The algebraic boundary of the Voronoi cell $\Vor_X(y)$  
is a hypersurface in the normal space to 
a variety $X \subset \RR^n$ at a point $y\in X$.
We study the degree of that hypersurface
when $X$ is a curve or a surface.
We denote this degree by $\delta_X(y)$ and refer to it as the Voronoi degree.
We identify $X$ and $\partial_{\rm alg} \! \Vor_X(y)$ with their
Zariski closures in complex projective space~$\PP^n$.

\begin{theorem} \label{thm:curves}
Let $X\subset \PP^n$ be a curve of degree~$d$ and geometric genus~$g$ with at most ordinary multiple points as singularities.  The Voronoi degree at a general point $y \in X$ equals
$$ \delta_X(y) \,\,= \,\, 4d+2g-6, $$ 
provided $X$ is in general position in $\PP^n$.
\end{theorem}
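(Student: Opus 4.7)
My plan is to realize $\delta_X(y)$ as a residual intersection number on the surface $S = X \times L$, where $L\subset N_X(y)$ is a general line through $y$. A point $u\in L$ lies on $\partial_{\rm alg}\!\Vor_X(y)$ precisely when some $x\in X\setminus\{y\}$ satisfies both the equidistance condition $\|x-u\|^2=\|y-u\|^2$ and the perpendicularity $(u-x)\perp T_xX$. Let $C_1,C_2\subset S$ be the divisors these conditions cut out. In $\mathrm{NS}(S)=\ZZ\langle F_1\rangle\oplus\ZZ\langle F_2\rangle$ with $F_1=\{pt\}\times L$ and $F_2=X\times\{pt\}$, the equidistance equation is quadratic in $x$ and linear in $u$, so a B\'ezout count with the sphere through $y$ gives $C_1\sim F_2+2d\,F_1$, and similarly $C_2\sim F_2+e\,F_1$ with $e=\text{EDdeg}(X)$. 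Both $C_1,C_2$ contain the trivial component $E:=\{y\}\times L\sim F_1$. Using the Taylor expansion $x(s)=y+sv+\tfrac{s^2}{2}w+\cdots$ with $v=T_yX$, the relation $(u-y)\cdot v=0$ for $u\in L$ forces $C_1$ to vanish along $E$ to order $2$ (the sphere is tangent to $X$ at $y$), while $C_2$ vanishes only to order $1$. Hence $\bar C_1\sim F_2+(2d-2)F_1$ and $\bar C_2\sim F_2+(e-1)F_1$, giving
\[
\bar C_1\cdot\bar C_2 \;=\; (2d-2)+(e-1) \;=\; 2d+e-3.
\]

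I then subtract two sources of excess that do not contribute to the affine Voronoi polynomial on $L$. First, at each of the $d$ points $x^*\in X\cap H_\infty$, both residual curves pass through $(x^*,\infty)\in S$: writing $u=y+sv_L$ and letting $x\to x^*$ along the asymptotic direction $\eta$, the equidistance and perpendicularity conditions both force $s\to\infty$ with distinct leading-order slopes $|\eta|^2/(2v_L\cdot\eta)$ and $\eta\cdot\tau/(v_L\cdot\tau)$ where $\tau=T_{x^*}X$; by the general-position hypothesis these are $d$ transverse intersections mapping to $u=\infty$. Second, at the center of curvature $(y,u_{\rm coc})$, the limit in $\bar C_1$ of $u(x)$ as $x\to y$, the residuals are smooth and mutually tangent with intersection multiplicity $2$ on $S$, but local coordinates show that the scheme-theoretic elimination $\bigl(v-a_2\xi^2+\cdots,\;(a_2-b_2)\xi^2+\cdots\bigr)\cap\CC[v]=(v)$ collapses this to a simple zero on $L$, so the excess is $1$. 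Combining,
\[
\delta_X(y) \;=\; \bar C_1\cdot\bar C_2 - d - 1 \;=\; d+e-4,
\]
and inserting the standard identity $e=3d+2g-2$ for the Euclidean distance degree of a smooth curve of degree $d$ and genus $g$ in general position (equivalently $e=d+d^\vee$ with dual-class degree $d^\vee=2(d+g-1)$) yields $\delta_X(y)=4d+2g-6$.

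The principal obstacle is the allowance of ordinary multiple points on $X$: the $g$ in the final formula must be the geometric genus. I plan to move the intersection calculation to the normalization $\nu\colon\tilde X\to X$ and carry out everything on $\tilde X\times L$. An ordinary $k$-fold point $p\in X$ has $k$ preimage branches on $\tilde X$; the equidistance sphere meets $X$ at $p$ with total multiplicity $2k$ split across these branches, and the perpendicularity condition reads branch-by-branch. Tracking these local contributions through the normalization should show that both $\bar C_1\cdot\bar C_2=2d+e-3$ (with $e$ the EDdeg of the normalization, still $3d+2g-2$ for the geometric genus $g$) and the excess count $d+1$ are unchanged, so the formula $4d+2g-6$ persists in the singular case.
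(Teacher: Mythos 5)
Your route is genuinely different from the paper's. The paper counts singular fibers of the pencil of quadrics $Q_{(t:u)}$ via the Euler characteristic of the fibration $\tilde X\to\PP^1$ (essentially Riemann--Hurwitz), which is self-contained; you instead compute a residual intersection $\bar C_1\cdot\bar C_2$ of the equidistance and criticality correspondences on $X\times L$ and feed in the known ED degree $e=3d+2g-2$ as an external input. Your global bookkeeping (the classes $F_2+2dF_1$ and $F_2+eF_1$, the multiplicities $2$ and $1$ of the trivial component $E$, and the $d$ transverse excess points over $u=\infty$ with the two leading slopes differing by a factor of $2$) all checks out and reproduces $\delta_X(y)=d+e-4=4d+2g-6$.

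However, your local analysis at the center of curvature is wrong, and the correct answer survives only because two errors cancel. Parametrize the curve as $x(s)=y+sv+\tfrac{s^2}{2}w+\tfrac{s^3}{6}z+\cdots$ and write $u=y+tv_L$. The residual equations are $g_1=\|v\|^2-t\,(w\cdot v_L)-\tfrac{1}{3}t\,(z\cdot v_L)s+O(s^2)$ for $\bar C_1$ and $g_2=\|v\|^2-t\,(w\cdot v_L)-\tfrac{1}{2}t\,(z\cdot v_L)s+O(s^2)$ for $\bar C_2$ (in the plane: $g_1-g_2=a_3ts+O(s^2)$). The linear terms in $s$ involve the \emph{third}-order coefficient of the curve with different weights, so for a general point they do not agree: the two residual curves meet \emph{transversally} at $(y,u_{\rm coc})$, with multiplicity $1$, not $2$. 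Your claim of tangency comes from truncating the Taylor expansion at second order. Correspondingly, $u_{\rm coc}$ is \emph{not} a zero of the Voronoi polynomial: in the paper's Remark~\ref{rem:count} the osculating sphere is tangent to $X$ only at $y$, and in the ideal-theoretic definition \eqref{eq:voonoiideal} the saturation by $\langle x-y\rangle^\infty$ removes the isolated point $(y,u_{\rm coc})$ of $V(g_1,g_2)$ before elimination, so it does not survive into $\Vor_I(y)$. The correct accounting is therefore excess $=1-0$, not $2-1$; the total is the same, but as written the step is not a proof. In addition, the reduction of the nodal case to the normalization is only a plan: you still need to verify that the classes of $C_1,C_2$ on $\tilde X\times L$ are unchanged (no branch of $X$ at a node is tangent to any sphere of the pencil, and the ED-critical equations acquire no excess component over the nodes), whereas the paper's fibration argument lives on the resolution from the start and sees only the degree-$(2d-2)$ divisor class and the geometric genus, so the singular case costs it nothing.
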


\begin{example} \rm
If $X$ is a smooth curve of degree $d$ in the plane, then $2g-2=d(d-3)$, so 
$$\delta_X(y)\,\,= \,\,d^2+d-4.$$
This confirms our experimental results in the row $n=2$ of Table~\ref{tab:voronoi}.
\end{example} 

\begin{example} \rm
If $X$ is a rational curve of degree $d$, then $g=0$  and hence
$\,\delta_X(y)=4d-6$.
If $X$ is an elliptic curve, so the genus is $g=1$, then we have $\delta_X(y) = 4d-4$.
A space curve with $d=4$ and $g=1$  was studied in Example~\ref{ex:spacecurve}.
Its Voronoi degree equals  $\delta_X(y) = 12$.
\end{example}

The proof of Theorem~\ref{thm:curves} appears in the next section. We will then
see  what general position means. For example,
let $X$ be the twisted cubic curve in $\PP^3$, with affine
parameterization $t\mapsto (t,t^2,t^3)$. Here $g=0$ and $d=3$, so the expected
Voronoi degree is $6$. But in Example~\ref{ex:twistedcubic} we saw 
$\delta_X(y) = 4$. This is explained by the fact that the plane 
at infinity in $\PP^3$ intersects the curve $X$ in a triple point. After a
general linear change of coordinates in $\PP^3$, which amounts to
a linear fractional transformation in $\RR^3$, we correctly find $\delta_X(y) = 6$.

We next present a formula for the Voronoi degree of a surface $X$ which is 
smooth and irreducible in $\PP^n$. Our formula is 
in terms of its degree $d$ and two further invariants.  The first, denoted
$\, \chi(X):=c_2(X)$, is the topological Euler characteristic.
This is equal to the degree of
the second Chern class of the tangent bundle. %\cite[Ex.~3.2.13]{FUL}. 
The second invariant, denoted $g(X)$, is the genus of the curve  obtained by intersecting $X$ with a general smooth quadratic
hypersurface in $\PP^n$. Thus, $g(X)$ is the quadratic analogue to the 
usual sectional genus  of the surface $X$.

\begin{theorem} \label{thm:surfaces} 
Let $X\subset \PP^n$ be a smooth surface of degree $d$. Then its Voronoi degree equals
$$\delta_X(y)\,\, = \,\, 3d + \chi(X)+4g(X) -11, $$
provided the surface $X$ is in general position in $\PP^n$ and $y$ 
is a general point on $X$.
\end{theorem}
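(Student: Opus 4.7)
The proof plan I would carry out is intersection-theoretic. Fix a general point $y \in X$ and a general line $\ell \subset N_X(y) \cong \CC^{n-2}$; then $\delta_X(y)$ is the number of pairs $(x,u) \in (X \setminus \{y\}) \times \ell$ satisfying $u \in N_X(x)$ and $\|u-x\|^2 = \|u-y\|^2$. My plan is to realize this count as an intersection number on a compactification $X \times \PP^n$, apply residual intersection theory to remove the trivial component at $x = y$, and evaluate the result using the Chern classes of $X$.

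First I would set up the incidence variety $\mathcal{I} \subset X \times \PP^n$ cut out by two conditions: projective Euclidean normality, encoded using the isotropic quadric $Q \subset H_\infty$ (this is what promotes orthogonality to a projective condition), and the bisector condition, which in the affine chart is the hyperplane $u \cdot (y-x) = \tfrac{1}{2}(y^2 - x^2)$. The class of $\mathcal{I}$ in $X \times \PP^n$ is then a polynomial in the hyperplane class $h_2$ on $\PP^n$ and classes pulled back from $X$ assembled from Chern classes of $T_X$, of the normal bundle $N_{X/\PP^n}$, and of the quadric class $2 h_2$. Taking a general line $\ell \subset N_X(y)$ corresponds to intersecting with $[X \times \bar\ell]$, which restricts the $\PP^n$ factor to a line and contributes $h_2^{n-1}$. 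Pushing $[\mathcal{I}] \cdot [X \times \bar\ell]$ down to $X$ yields a Chern-number expression on $X$ that is linear in $d = h^2$, $K_X \cdot h$, and $c_2(T_X)$.

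Next I would extract $\delta_X(y)$ from this ambient intersection number by subtracting the contribution of the excess component $E = \{y\} \times \bar\ell$, which is automatically contained in $\mathcal{I} \cap (X \times \bar\ell)$: both the normality and bisector conditions hold trivially when $x = y$. This excess is precisely why the critical ideal of Section \ref{s:2} must be saturated by $\langle x - y\rangle^\infty$ to produce the Voronoi ideal. By Fulton's residual intersection formula, the contribution of $E$ equals an integral over $E \cong \PP^1$ of the top Segre class of a rank-$(n-2)$ excess bundle built from $T_y X$, $N_X(y)$, and the line data along $\bar\ell$; this contribution should be independent of the global geometry of $X$ and is what absorbs the constant $-11$ in the final answer.

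Finally, I would translate the remaining Chern-number expression into the stated invariants: identify $c_2(T_X)$ with $\chi(X)$, and use the adjunction formula $2 g(X) - 2 = (K_X + 2h) \cdot 2h$ for a quadric section to eliminate $K_X \cdot h$ in favor of $g(X)$ and $d$. After simplification the result should read $3d + \chi(X) + 4 g(X) - 11$. The main obstacle is the residual computation: the excess locus $E$ sits non-transversely in the intersection, and identifying its normal bundle and the multiplicity structure of $\mathcal{I}$ along it demands a careful local analysis at $y$, mirroring the saturation step in the Gröbner-based approach of Section \ref{s:2}. A secondary subtlety is the role of the isotropic quadric $Q$ at infinity, which is ultimately responsible for the appearance of $g(X)$, the genus of the quadric section, in place of a more obvious hyperplane-section genus.
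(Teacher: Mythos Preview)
Your approach is a reasonable intersection-theoretic plan, but it is genuinely different from the paper's. The paper does not work on $X \times \PP^n$ at all, nor does it invoke residual/excess intersection. Instead it reinterprets the count along a general normal line as a \emph{pencil of quadrics}: the spheres centered on that line and passing through $y$ form a one-parameter family $\{Q_{(t:u)}\}_{(t:u)\in\PP^1}$ tangent to $X$ at $y$, and $\delta_X(y)$ is the number of members tangent to $X$ at a second point. Blowing up the base locus $X \cap Q_{(1:0)} \cap Q_{(0:1)}$ (which consists of $4d-3$ points, with $y$ counted with multiplicity $4$) yields a morphism $q:\tilde X \to \PP^1$, and the paper computes $\chi(\tilde X)$ two ways: directly as $\chi(X)+4d-3$, and via the fibration formula
\[
\chi(\tilde X)\,=\,(1-v)\,\chi(X_{gen})+\chi(X_{(0:1)})+v\,\chi(X_s),
\]
where $X_{gen}$ is a smooth fiber, $X_s$ a uninodal fiber, and $X_{(0:1)}$ the reducible fiber coming from $\{x_0 x_n=0\}$. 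Solving for $v$ gives the formula.

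What each approach buys: the paper's Euler-characteristic method sidesteps precisely the obstacle you flag as hardest---the residual computation at $\{y\}\times\bar\ell$---because the node at $y$ is absorbed uniformly into the genus drop $g\mapsto g-1$ of \emph{every} fiber, and the special-position contributions are handled by the single reducible fiber $X_{(0:1)}$ rather than by Segre classes of an excess bundle. Your Chern-class route is closer in spirit to the ED-degree computations in \cite{DHOST} and would in principle generalize more mechanically to higher dimensions, but carrying out the excess-intersection step rigorously (and confirming it yields exactly the constant $-11$) is real work that your proposal only sketches. The paper's method is shorter here but, as the authors note, does not obviously extend beyond $\dim X \leq 2$.
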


The proof of Theorem~\ref{thm:surfaces} will also be presented in the next section.
At present we do not know how to generalize these formulas to the case when
$X$ is a variety of dimension $\geq 3$.

\begin{example} \rm
If $X$ is a smooth surface in $\PP^3$ of degree $d$, then $\chi(X)= d(d^2-4d+6)$, by \cite[Ex~3.2.12]{FUL}. A smooth quadratic hypersurface section of $X$
is an irreducible curve of degree $(d,d)$ in $\PP^1 \times \PP^1$. The genus of such a curve
is $g(X) = (d-1)^2$. We conclude that
$$\delta_X(y)\,\,\,=\,\,\, 3d  \,+ d(d^2-4d+6)\,+\,4(d-1)^2\,-\,11\,\,\,=\,\,\,d^3+d-7.$$
This confirms our experimental results in the row $n=3$ of Table~\ref{tab:voronoi}.
\end{example}

\begin{example}\label{ex:veronese0} \rm
Let $X$ be the Veronese surface of order $e$ in $\PP^{\binom{e+1}{2}-1}$,
taken after a general linear change of coordinates in that ambient space.
The degree of $X$ equals $d = e^2$.
We have $\,\chi(X)= \chi(\PP^2) =3\,$, and the general quadratic hypersurface section 
 of $X$ is a curve of genus  $\,g(X) =   \binom{2e-1}{2}$.  We conclude that the 
Voronoi degree of $X$ at a general point $y$ equals
$$\delta_X(y)\,\, = \,\, 3 e^2 \,+\,
  3\,+\,2(2e{-}1)(2e{-}2)\,-\,11 \,\,=\,\,\,11e^2-12e-4. $$
For instance, for the quadratic Veronese surface in $\PP^5$ we have $e=2$ and hence
$\,\delta_X(y) = 16$.
This is smaller than the number~$18$ found in Example~\ref{ex:331},
since back then we were dealing with the cone over the Veronese surface in~$\RR^6$,
and not with the Veronese surface in $\RR^5 \subset \PP^5$.
\end{example}

We finally consider affine surfaces
 defined by homogeneous polynomials.
Namely, let $X\subset \RR^n$ be the affine cone over a general smooth curve
of degree $d$ and genus $g$ in~$\PP^{n-1}$.

\begin{theorem} \label{thm:cones} 
Let $X\subset \RR^n$ be the cone over a smooth curve in $\PP^{n-1}$.
Its Voronoi degree is
$$\delta_X(y)\,\, = \,\, 6d + 4g- 9 $$
provided that the curve is in general position and $y$ is a general point.
\end{theorem}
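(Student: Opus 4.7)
My plan is to adapt the intersection-theoretic arguments underlying Theorems~\ref{thm:curves} and~\ref{thm:surfaces} by exploiting the ruled structure of the cone $X$, reducing the Voronoi degree computation to an intersection calculation on the smooth base curve $Y \subset \PP^{n-1}$.

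First, I parametrize $X$ by its base: every smooth point has the form $y' = sq$ with $q \in Y$ and $s \in \RR \setminus \{0\}$, and $T_{sq}X$ is spanned by the ruling direction $q$ together with a lift of $T_q Y$. Fix a general smooth $y \in X$. Writing out the normality condition $u - y' \perp T_{y'}X$ together with $u \in N_X(y)$ (which already forces $u \cdot y = \|y\|^2$), the ruling part of normality gives $s = (u \cdot q)/\|q\|^2$, and substitution into the equidistance equation $\|u-y\|^2 = \|u-y'\|^2$ collapses, after cancellation, to the remarkably clean quadratic
\[ (u \cdot q)^2 \;=\; \|q\|^2 \, \|y\|^2. \]
Combined with the tangential normality $u - sq \perp T_q Y$ and the linear condition $u \in N_X(y)$, this exhibits the critical/equidistance correspondence $\mathcal{E}(y)$ as a double cover $\widetilde{Y} \to Y$, and the algebraic Voronoi boundary is the image of $\widetilde{Y}$ under the projection $\pi \colon \mathcal{E}(y) \to N_X(y)$.

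Next, I would compute $\delta_X(y) = \deg \pi(\widetilde{Y})$ via intersection theory on $\widetilde{Y}$. Pulling back a generic hyperplane of $N_X(y)$ yields a divisor on $\widetilde{Y}$ whose degree decomposes into a linear contribution from the ``sign'' equation $u \cdot q = \pm \|q\| \, \|y\|$, a contribution from the tangential normality coupling $u$ to $T_q Y$, and a Riemann--Hurwitz correction from the double cover $\widetilde{Y} \to Y$ branched at those $q \in Y$ where the two sheets coincide. Assembling these pieces should produce $6d$ from the degree terms and $4g$ from the genus correction, with the constant $-9$ emerging after excess intersections are subtracted.

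The main obstacle is the excess-intersection analysis needed to remove spurious components of $\mathcal{E}(y)$ that do not contribute to the Voronoi boundary. These arise from the ruling through $y$ itself (where $y' = y$ produces a trivial ``equidistant'' solution), from the isotropic locus $\|q\|^2 = 0$ on the complex base curve (where $s = (u \cdot q)/\|q\|^2$ degenerates), and from the vertex of the cone (where the normal bundle degenerates). Each such locus contributes a spurious component whose multiplicity must be computed and subtracted, exactly in the spirit of the proof of Theorem~\ref{thm:surfaces}; tracking these corrections correctly, under the generic-position hypotheses on $Y$ and $y$, is what produces the constant $-9$.
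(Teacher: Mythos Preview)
Your reduction is correct and is a genuinely different route from the paper's: with $y'=sq$, ruling normality gives $s=(u\cdot q)/\|q\|^2$, and after using $u\cdot y=\|y\|^2$ the equidistance equation does collapse to $(u\cdot q)^2=\|q\|^2\,\|y\|^2$. Over the base curve $Y$ this defines a double cover branched along the $2d$ isotropic points $\{q\cdot q=0\}\cap Y$, and by Riemann--Hurwitz that cover has genus $2g+d-1$, which is exactly the quadric-sectional genus that drives the paper's computation, so the two pictures are consistent.

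The gap is that you stop precisely where the proof begins. Everything after the displayed equation is programmatic: ``should produce $6d$'', ``a Riemann--Hurwitz correction'', ``tracking these corrections correctly \ldots\ is what produces the constant $-9$''. You never specify the linear system on $\tilde Y$ whose degree you are computing, never write down the map $\tilde Y\to N_X(y)$ or its degree, and never compute a single excess multiplicity for the loci you list (the ruling through $y$, the isotropic points, the vertex). That bookkeeping \emph{is} the theorem. The paper avoids the base curve entirely: it passes to the projective closure of the cone, blows up the vertex and the $4d-3$ basepoints of the pencil $Q_{(t:u)}\cap X$ to get a morphism $\tilde X\to\PP^1$, and reads off $v$ from the Euler-number identity $\chi(\tilde X)=(1-v)\chi(X_{gen})+\chi(X_{(0:1)})+v\,\chi(X_s)$, using $\chi(\tilde X)=2(2-2g)+4d-3$, $\chi(X_{gen})=4-2d-4g$, and the decomposition $X_{(0:1)}=C_0+L_0+\cdots+L_{d-2}+C_\infty$ giving $\chi(X_{(0:1)})=4-4g$. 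Your approach is plausible and might yield a cleaner argument in the end, but as written it asserts the answer rather than deriving it.
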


The proof of Theorem~\ref{thm:cones} will be presented in the next section.

\begin{example} \rm 
If $X\subset\RR^3$ is the cone over a smooth curve of degree $d$ in $\PP^2$, then
$2g-2=d(d-3)$. %by the degree genus formula for plane curves.
Hence the Voronoi degree of $X$ is
$$\delta_X(y)\,\,=\,\,2d^2-5.$$
This confirms our experimental results in the row $n=3$ of Table~\ref{tab:voronoihom}.
\end{example}

To conclude, we comment on the assumptions made in our theorems.
We assumed that the variety $X$ is in general position in $\PP^n$.
If this is not satisfied, then the Voronoi degree may drop.
Nonetheless, the technique introduced in the next section can be adapted to 
determine the correct value.
As an illustration, we consider the affine Veronese surface (Example~\ref{ex:veronese0}).

\begin{example} \label{ex:veronese} \rm
  Let $X\subset \PP^5$ be the Veronese surface with affine parametrization $(s,t)\mapsto (s,t,s^2,st,t^2)$.
  The hyperplane at infinity intersects $X$ in a double conic, so $X$ is not in general position.
  In the next section, we will show that the true Voronoi degree is $\delta_X(y)=10.$
  For the Frobenius norm, the Voronoi degree drops further.
  For this, we shall derive $\delta_X(y)=4$.
\end{example}

\section{Euler Characteristic of a Fibration}\label{s:6}

In this section we develop the geometry and the proofs for the degree formulas in Section~\ref{s:5}.
Let $X\subset \PP^n$ be a smooth projective variety defined over~$\RR$.  We assume that $y\in X$ is a general point, and that we fixed an affine space $\RR^n\subset \PP^n$ containing $y$ such that the hyperplane at infinity $\PP^n\backslash \RR^n$ is in general position with respect to $X$.  We use the Euclidean metric in this $\RR^n$ to define the normal space to $X$ at $y$.  This can be expressed equivalently as follows.
 After a projective transformation in $\PP^n$, we can assume
that $\RR^n = \{x_0=1\}$,  that
  $y=[1:0:\cdots:0]$ is a point in $X$, and that 
  the tangent space to $X$ at $y$ is contained in the hyperplane $\{x_n=0\}$.   
  The normal space to $X$ at $y$ contains the line 
 $\{x_1=\ldots = x_{n-1} = 0\}$.
 
 The sphere  through $y$ with center $[1 :0 : \cdots : 0 : u\,]$ on this normal line~is
 $$ \quad Q_u \,\,= \,\, \{\,x_1^2+\cdots+x_{n-1}^2+(x_n-u)^2-u^2   \,= \,0 \,\}
 \quad \subset \,\,\,\RR^n. $$
As $u$ varies, this is a linear pencil that extends to a $\PP^1$ family of quadric hypersurfaces
$$Q_{(t\,:\,u)} \,\,=\,\, \{\,t(x_1^2+\cdots+x_{n-1}^2+x_{n}^2) -2u x_0 x_n\,=\,0 \,\} 
\quad \subset \,\, \PP^n. $$
Note that $Q_{(t\,:\,u)}$ is tangent to $X$ at $y$.
Assuming the normal line to be general, we observe:

\begin{remark}  \label{rem:count} \rm
The  Voronoi degree $\,\delta_X(y)\,$ is the number of quadratic hypersurfaces  $Q_{(t\,:\,u)}$ with $t\not= 0$ that are tangent to $X$ at a point in the affine space $\{x_0=1\}$ distinct from~$y$.
\end{remark}

We shall compute this number by counting tangency points of all quadrics in the pencil.  In particular we need to consider the special quadric $Q_{(0:1)}=  \{x_0 x_n=0\} $.
This  quadric is reducible: it consists of the tangent hyperplane $\{x_n=0\}$ and the hyperplane at infinity $\{x_0=0\}$.
It is singular along a codimension two linear space $\{x_0=x_n=0\}$ at infinity.
Any point of $X$ on this linear space is therefore also a point of tangency between $X$ and $Q_{(0:1)}$.

To count the tangent quadrics, we consider
the map $\,X\dashrightarrow \PP^1,\,   x\mapsto (t:u)\,$ whose fibers are 
the intersections $X_{(t:u)} = Q_{(t:u)}\cap X$.
By Remark~\ref{rem:count}, we need to
count its ramification points. However,
this map is not a morphism. Its base locus is
$X\cap Q_{(1:0)}\cap Q_{(0:1)} $. We blow up that base locus to get a
morphism which has the intersections $X_{(t:u)} $ for its fibers:
$$ q: \tilde X\to \PP^1.$$
The topological Euler characteristic (called {\em Euler number}) of the
 fibers of $q$ depends on the singularities. We shall count the tangencies 
 indirectly, by computing the  Euler number
$\chi(\tilde X)$  of the blow-up $\tilde X$ in two ways, first directly as a variety, 
and secondly as a fibration over $\PP^1$.

Euler numbers have the following two fundamental properties. The first 
property is {\em multiplicativity}. It
 is found in topology books, e.g.~\cite[Chapter 9.3]{spanier}.
Namely, if $W\to Z$ is a surjection of topological spaces, $Z$ is connected and all fibers are homeomorphic to a topological space $Y$, then $\chi(W)=\chi(Y)\cdot \chi(Z)$.  The second property is {\em additivity}. It applies to complex varieties, 
as seen in  \cite[Section 4.5]{FulT}.
To be precise, if $Z$ is a closed algebraic subset of a 
complex variety $W$ with complement $Y$, then $\chi(W)=\chi(Y)+\chi(Z)$.

\smallskip

For the fibration $ q: \tilde X\to \PP^1$, the first property may be applied to the set of fibers that are smooth, hence homeomorphic, while the second may be used when adding the singular fibers.  Assuming that singular fibers (except the special one) have a quadratic node as its singular point, the Voronoi degree $v:=\delta_X(y)$ satisfies the equation
\begin{align}\label{eq:euler}
\chi(\tilde X)&\,\,=\,\,(1-v)\,\chi(X_{gen})\,+\,\chi(X_{(0:1)})\,+ \,v\;\chi(X_s). 
\end{align}
Here $X_{gen}$ is a smooth fiber of the fibration, $X_{(0:1)}$ is the special fiber over $(0:1)$, and $X_s$ is a fiber with one quadratic node as singular point.   The factor $1-v$ is the 
Euler~number of $\PP^1\,\backslash \{v{+}1\; {\rm points}\}$.
We will use~\eqref{eq:euler} to derive the degree formulas from Section 5, and refer to \cite[Ex 3.2.12-13]{FUL} for Euler numbers of smooth curves, surfaces and hypersurfaces.

\begin{proof}[Proof of Theorem~\ref{thm:curves}]
Let $\bar X\to X$ be a resolution of singularities.
As above, we assume that $y$ is a smooth point on $X$ and that  $\{x_n=x_0=0\}\cap X=\emptyset$. We may pull back the  pencil of quadrics $Q_{(t:u)}$ to $ \bar X$. 
This gives a map $ \bar q: \bar X \dashrightarrow \PP^1$.
All quadrics in the pencil have multiplicity at least $2$ at $y$,  so we remove the divisor $2[y]$ from each divisor in the linear system $\{X_{(t:u)}\}_{(u:t)\in \PP^1}$.
Thus we obtain a pencil of divisors of degree $2d-2$ on $X$ that defines a morphism 
$q:\bar X\to \PP^1.$
The Euler number of $\bar X$ is $\chi(\bar X)=2-2g$. %~\cite[Ex~3.2.13]{FUL}.
The Euler number of a fiber is now simply the number of points in the fiber, i.e. $\chi(X_{gen})=2d-2$  and $\chi(X_s)=2d-3$ for the singular fibers, the fibers where one point appear with multiplicity two.
Also $\chi(X_{(0:1)})=2d-2$, since $X_{(0:1)}$  consists
of $2d-2$ points.  
Plugging into~\eqref{eq:euler} we get:
$$2-2g \,=\,(1-v)(2d-2)+(2d-2)+v(2d-3)\,=\,4d-4-v. $$
We now obtain Theorem~\ref{thm:curves} by solving for $v$.
\end{proof}

The above derivation can also be seen as an application of the Riemann-Hurwitz formula.

\begin{proof}[Proof of Theorem~\ref{thm:surfaces}]
The curves $\{X_{(t:u)}\}_{(u:t)\in \PP^1}$ have a common intersection. This is our base locus
$ X_{(1:0)}\cap X_{(0:1)} $.  
By B\'ezout's Theorem, the number of intersection points is at most $2\cdot 2\cdot d=4d$. 
All curves are singular at $y$, the general one a simple node, so this point counts with multiplicity $4$ in the intersection.  
We assume that all other base points are simple. We thus have $4d-4$ simple points.
We blow up all the base points, $\pi: \tilde X\to X$, with exceptional curve $E_0$ over $y$ and $E_1,\ldots,E_{4d-4}$ over the remaining base points.
The strict transforms of the curves $X_{(t:u)}$ on  $\tilde X$ are then the fibers of a morphism 
$q:\tilde X\to \PP^1$
for which we apply~\eqref{eq:euler}.

The Euler number $\chi(X)$ equals the degree of the Chern class $c_2(X)$ of the tangent bundle of $X$.
Since $\pi$ blows up $4d-3$ points, there are $4d-3$ points on $X$ that are replaced by $\PP^1$s on $\tilde X$.
Since $\chi(\PP^1)=2$, we get 
$\chi(\tilde X)=\chi(X)+4d-3.$
If the genus of a smooth hyperquadric intersection with $X$ is $g$, the general fiber of $q$ is a smooth curve of genus $g-1$, since it is the strict transform of a curve that is singular at $y$. We conclude that
 $\chi(X_{gen})=4-2g$.

To compute $\chi(X_s)$ we remove first the singular point of $X_s$ and obtain a smooth curve of genus $g-2$ with two points removed.
This curve has Euler number $-2(g-2)$.
Adding the singular point, the additivity of the Euler number yields
$$\chi(X_s)\,=\,-2(g-2)+1\,=\,5-2g.$$
The special  curve $X_{(0:1)}$ has two components, one in the tangent plane that is singular at the point of tangency, and one in the hyperplane at infinity.
Assume that the two components are smooth outside the point of tangency and that they meet transversally, i.e., in $d$ points.
We then compute $ \chi(X_{(0:1)})$, as above, by first removing the $d$ points of intersection to get a smooth curve of genus $g-1-d$ with $2d$ points removed, 
and we next use the addition property to add the $d$ points back.
Thus  
$$\chi(X_{(0:1)})\,\,=\,\,2\,-\,2(g-d-1)\,-\,2d \,+\,d \,\,=\,\, d+4-2g.$$
Substituting into the formula~\eqref{eq:euler} gives
$$\chi(X)+4d-3\,\,=\,\,(1-v)(4-2g) \,+\,d+4-2g\,+\,v(5-2g).$$ 
From this we obtain the desired formula
$\,v=3d+\chi(X)+4g-11.$
\end{proof}

\begin{proof}[Details for Example~\ref{ex:veronese}]
The given Veronese surface~$X$ intersects the hyperplane at infinity in a double conic instead of a smooth curve.
We explain how to compute the Voronoi degree in this case.
For the Euclidean metric, the general curve $X_{(t:u)} = Q_{(t:u)}\cap X$ is transverse at four points on this conic.
Then $X_{(0:1)}$ has seven components, $\chi(X_{(0:1)})=8$ and $\delta_X(y)=10.$

For the Frobenius metric,
the curves $X_{(t:u)}$ are all singular at two distinct points on this conic.
The three common singularities of the curves $X_{(t:u)}$ are part of the base locus of the pencil.
Outside  these three points, the pencil has $4$ additional basepoints, so the 
map $\tilde X\to X$ blows up $7$ points.
Hence $\chi(\tilde X)=10$.
The curve $X_{gen}$ is now rational, so $\chi(X_{gen})=2$ and $\chi(X_{s})=1$.
The reducible curve $X_{(0:1)}$  has two components from the tangent hyperplane, and only the conic from the hyperplane at infinity.
Therefore $X_{(0:1)}$ has three components, all $\PP^1$s, two that are disjoint and one that meets the other two in one point each, and so $\chi(X_{(0:1)})=4.$  
Equation~\eqref{eq:euler} gives
$10 = (1-v)\cdot 2  + 4 + v\cdot 1 $, and hence $v=\delta_X(y)=4.$
\end{proof}

\begin{proof}[Proof of Theorem~\ref{thm:cones}]
The closure of the affine cone $X$ in~$\PP^n$ is a projective surface as above.
We need to blow up also the vertex of the cone to get a morphism from a smooth surface 
$q:\tilde{X}\to \PP^1.$
The Euler number of the blown up cone is $\chi(C)\cdot \chi(\PP^1)=2(2-2g)$, so 
$$\chi(\tilde{X})\,\,= \,\,2(2-2g)+4d-3 \,\,= \,\,1+4d-4g.$$
The genus of a smooth quadratic hypersurface section $X\cap Q$ is then $g(X\cap Q)=2g-1+d$. Hence the strict transform of each one nodal  quadratic hypersurface section $X_{gen}$ has genus $2g-2+d$. The
Euler number equals $\chi(X_{gen})= 4-2d-4g$, while  $\chi(X_{s})= 3-2d-4g$.

The tangent hyperplane at $y$ is tangent to the 
line $L_0$ in the cone through $y$, and it intersects
the surface $X$ in $d-2$ further lines $L_1,\ldots,L_{d-2}$.  Therefore,
$$ X_{(0:1)}\,\,=\,\,C_0+L_0+L_1 +\cdots+L_{d-2}+C_\infty,$$
where $C_0$ is the exceptional curve over the vertex of the cone, $C_\infty$ is the strict transform of the curve at infinity, and  the $L_i$ are the strict transforms of the lines through $y$.  We conclude
\begin{align*}
\chi(X_{(0:1)}) \,\,&=\,\, \chi(C_0\backslash (d-1)\; {\rm points})+\chi(C_\infty\backslash (d-1)\; {\rm points})\,+\,(d-1)\cdot \chi(\PP^1) \\
&=\,\, 2(2-2g-(d-1))+2(d-1) \,\, = \,\, 4-4g.
\end{align*}
From~\eqref{eq:euler} we get
\begin{align*}
\chi(\tilde X) \,\,&=\,\, (1-v)\chi(X_{gen})+\chi(X_{(0:1)})+v\cdot \chi(X_s),  \\
1{+}4d{-}4g \,\,&=\,\,  (1{-}v)\cdot (4{-}2d{-}4g)+4{-}4g+ v\cdot (3{-}2d{-}4g))
\,\,=\,\,  v - 8g - 2d + 10.
\end{align*}
This means that the Voronoi degree is
$\delta_X(y)=v=4g+6d-9.$
\end{proof}

This concludes the proofs of the degree formulas we were able to find for boundaries of Voronoi cells. 
It would, of course, be very desirable to extend these to higher dimensional varieties.
The work we presented is one of the first steps in metric algebraic geometry. 
In additional to addressing some foundational questions, 
natural connections to low rank approximation (Section 3) and convex optimization (Section 4) were highlighted.

\bigskip \bigskip \bigskip

\paragraph{Acknowledgments.}
Research on this project was carried out while the authors 
were based at the Max-Planck Institute
for Mathematics in the Sciences (MPI-MiS) in Leipzig and at the
Institute for Experimental and Computational Research in
Mathematics (ICERM) in Providence. We are grateful to
both institutions for their support. Bernd Sturmfels and Madeleine Weinstein
received additional support from the US National Science Foundation.

\bigskip

\begin{small}

\end{small}

\bigskip \bigskip \bigskip \bigskip

\noindent
\footnotesize {\bf Authors' addresses:}

\smallskip

\noindent Diego Cifuentes, Massachusetts Institute of Technology
\hfill {\tt diegcif@mit.edu}

\noindent Kristian Ranestad, University of Oslo
\hfill {\tt ranestad@math.uio.no}

\noindent Bernd Sturmfels,
 \  MPI-MiS Leipzig and
UC  Berkeley \hfill  {\tt bernd@mis.mpg.de}

\noindent Madeleine Weinstein,
UC  Berkeley \hfill {\tt maddie@math.berkeley.edu}

\end{document}